\documentclass{amsart}
\newtheorem{thm}[subsection]{Theorem}
\newtheorem{defn}[subsection]{Definition}
\newtheorem{lemma}[subsection]{Lemma}
\newtheorem{cor}[subsection]{Corollary}
\newtheorem{prop}[subsection]{Proposition}
\theoremstyle{definition}

\newtheorem{rmk}[subsection]{Remark}

\numberwithin{equation}{section}

\begin{document}

\title[Perturbation of Closed Range Operators and Moore-Penrose inverse]{Perturbation of Closed Range Operators and Moore-Penrose inverse }
\author{S. H. Kulkarni}

\address{Department of Mathematics\\I. I. T. Madras, \\ Taramani, Chennai \\ India-600 036.}

\email{shk@iitm.ac.in}

\author{G. Ramesh}

\address{Office 305, Block E \\I. I. T. Hyderabad, Kandi (V)\\Sangareddy (M), Telangana \\ India-502 205.}

\email{rameshg@iith.ac.in}

\thanks{}
\subjclass[2000]{ 47A55 }

\date{\today.}

\keywords{closed operator, reduced minimum modulus, Moore-Penrose inverse}

\begin{abstract}
Let $H_1,H_2$ be complex Hilbert spaces and $T:H_1\rightarrow H_2$ be a densely defined closed operator with domain $D(T)\subseteq H_1$ and
$T^{\dagger}$ be the Moore-Penrose inverse of $T$. Let $S:H_1\rightarrow H_2$ be a bounded operator. In this article we focus our attention on the following questions:
\begin{itemize}
\item Under what conditions closedness of of range of $T$ will imply the closedness of range of $T+S$?
\item What is the relation between $T^{\dagger}$ and $(T+S)^{\dagger}$?
\item What is the relation between $T^{\dagger}$ and $S^{\dagger}$?
\end{itemize}

\end{abstract}
\maketitle
\section{Introduction}
Most of the  problems that we encounter in many branches of Mathematics (e.g. Partial Differential Equations, Optimization Theory, Numerical Analysis etc,) and several branches in Engineering and physics end up in  solving operator equations of the form
\begin{equation}\label{operatorequation}
Tx=y,
\end{equation}
where $y$ is a given vector in a Hilbert space, $T$ is a linear operator (possibly unbounded). We need to find out the vector $x$. In case, the operator $T$ is not invertible, the equation cannot have a solution. In such cases one alternate approach is to look for an approximate solution instead of the original solution. This is given by $x=T^{\dagger}y$, where $T^{\dagger}$ is the Moore-Penrose inverse of $T$, provided $y$ is in the domain of $T^{\dagger}$.  In  this case the equation (\ref{operatorequation}) is solvable for all $y$ in the domain of $T^{\dagger}$ and the solution is unique.

Another important task is the stability of the solution. The solution (least square solution of minimal norm) is stable only if $T^{-1}$ (or $T^{\dagger}$) is continuous. The perturbation theory tells us that how much we can perturb the operator without loosing the desired properties of the solution.

In this article we discuss the perturbation results for the Moore-Penrose inverse of a densely defined closed operator under two different set of conditions.

Let $H_1,H_2$ be complex Hilbert spaces and $T:H_1\rightarrow H_2$ be a  closed operator with dense domain $D(T)\subseteq H_1$. We prove perturbations results related to the perturbation of closed range of operators and Moore-Penrose inverses under different conditions. Here we state the results.

Let $T$ be a densely defined  closed operator.
\begin{itemize}
\item[(I)] \label{1sttype}
Suppose  $S\in \mathcal B(H_1,H_2)$ satisfies the
following conditions:

\begin{enumerate}\label{newdagger}
\item $||T^\dagger S||<1$
 \item $TT^\dagger S=S$  and
 \item $ST^\dagger T=S|_{D(T)}$.
 \end{enumerate}
 Then \begin{itemize}
\item[(a)] $R(T+S)$ is closed and
\item[(b)]$(T+S)^\dagger=(I+T^\dagger S)^{-1}T^\dagger$.
 \end{itemize}
\item[(II)]\label{2ndtype}
Let $S\in \mathcal B(H_1,H_2)$ be such that
\begin{equation*}
\|Sx\|\leq \lambda_1\|Tx\|+\lambda_2 \|Sx+Tx\|,\; \text{for all}\; x\in D(T),\; \text{where}\; \lambda_1 <1,\; i=1,2.
\end{equation*}
We prove the following results:
\begin{enumerate}
\item If $R(T)$ is closed, then $R(T+S)$ is closed
\item In addition if $T$ is surjective, then
\begin{itemize}
\item[(i)] $(T+S)^{\dagger}=T^{\dagger}(I+T^{\dagger}S)^{-1}$
\item[(ii)] $S^{\dagger}=T^{\dagger}\displaystyle \sum_{n=0}^\infty [(S-T)T^{\dagger}]^n=T^{\dagger}(I+(S-T)T^{\dagger})^{-1}$.
\end{itemize}
In particular, if $\lambda_2=0$, we get the following error bound:
\begin{equation*}
\|(S+T)^{\dagger}-T^{\dagger}\|\leq \dfrac{\|T^{\dagger}\|^2\|S\|}{1-\|ST^{\dagger}\|}.
\end{equation*}
\end{enumerate}
\end{itemize}
The  perturbation results in $(I)$ are the generalizations of the results in
\cite{stewart} from the case of rectangular matrices to the case of
closed operators on Hilbert spaces. Many authors have studies
 these results for Moore-Penrose inverses of bounded operators on
Hilbert spaces \cite{ben66,dinghuang94,dinghuang96,dingj} and Banach
spaces \cite{nashed}.

  The statements $(1)$ and $2(ii)$ of $(II)$, generalizes  the results of Jiu Ding \cite{jiuding} in one direction, namely from  bounded operators on Hilbert spaces to unbounded closed operators between Hilbert spaces. In fact Ding proved these statements for bounded operators on Banach spaces. Using the concept of gap between two subspaces Ding and Huang \cite{dinghuang94} proved some perturbation results for bounded operators on Hilbert spaces. These results were used to obtain perturbation results of frames in \cite{christensenclosedrange}. In the case of matrices the perturbation results were extensively studied by Stewart and Ben-Israel in \cite{ben66,stewart} and for Banach space operators by Nashed and Moore\cite{nashedmoore}. Some of these results also can be found in \cite{campbellmeyer}. We have noticed that these results were not studied for unbounded operators. In this article we attempt to extend the results of Jiu Ding \cite{jiuding} to unbounded operators. We also prove some results which are not available in \cite{jiuding}.
  In the process we prove the reverse order law for the Moore-Penrose inverses of closed operators under some assumptions.
  Some perturbation results for relative bounded operators are appeared in a recent article \cite{huangetall12}. The authors in \cite{huangetalbanach} discussed perturbation of closed range operators and Moore-Penrose inverses of relative bounded operators between Banach spaces with an extra assumption that is similar to the one we considered. The article \cite{shkgrcgt} contains some perturbation results for Moore-Penrose inverses of closed operators between Hilbert spaces with respect to a new topology.

We organize the paper as follows: In the second section we introduce some notations, definitions and basic results which are helpful in proving the main results. In the third section we prove perturbations results assuming a set of conditions and in the fourth section we prove the perturbation results with a different set of assumptions.
\section{Notations and Preliminary results}
Throughout the article we consider infinite dimensional complex Hilbert spaces which will be denoted by $H, H_1,H_2$ etc . The inner product and the
induced norm are denoted  by  $\langle,\rangle$ and
$||.||$ respectively. Let $T$ be a linear operator with domain $D(T)$ , a subspace of $H_1$ and taking values in $H_2$. The  graph $G(T)$ of $T$ is defined by  $G(T):={\{(x,x):x\in D(T)}\}\subseteq H_1\times H_2$. If $G(T)$ is
closed, then $T$ is called a closed operator. If $D(T)$ is dense in $H_1$, then $T$ is called a densely defined operator. For such an operator there exists a unique linear operator $T^*:D(T^*)\rightarrow H_1$, where
\begin{equation*}
D(T^*):={\{y\in H_2: x\rightarrow \langle Tx,y\rangle \, \text{for all}\, x\in D(T)\,\text{is continuous}}\}
\end{equation*}
and $\langle Tx,y\rangle =\langle x,T^*y\rangle$ for all $x\in D(T)$ and $y\in D(T^*)$. This operator is called as the adjoint of $T$. Whether or not $T$ is closed, $T^*$ is closed. By the  closed graph Theorem \cite[page 306]{riesznagy}, an everywhere defined
closed operator is bounded.  Hence the domain of an unbounded closed operator is a proper subspace of a Hilbert space.

The space of all bounded operators between $H_1$ and $H_2$ is
denoted by $\mathcal B(H_1,H_2)$ and the class of all closed operators between $H_1$ and $H_2$ is denoted by $\mathcal C(H_1,H_2)$. We write $\mathcal B(H,H)=\mathcal B(H)$  and $\mathcal C(H,H)=\mathcal C(H)$.

If $T\in \mathcal C(H_1,H_2)$, then  the null space and the range space of $T$ are denoted by $N(T)$ and $R(T)$ respectively and the space $C(T):=D(T)\cap N(T)^\bot$ is called the carrier of $T$.
In fact, $D(T)=N(T)\oplus^\bot C(T)$ \cite[page 340]{ben}.

If $T\in \mathcal L(H_1,H_2)$ and $S\in \mathcal L(H_2,H_3)$. Then $D(ST)={\{x\in D(T):Tx\in D(S)}\}$.

If $S$ and $T$ are closed operators  with the property that $D(T)\subseteq D(S)$ and $Tx=Sx$ for all $x\in D(T)$, then $S$ is called the restriction of $T$ and $T$ is called an extension of $S$.
Let $T\in \mathcal C(H)$ be densely defined. Then $T$ is said to be
\begin{itemize}
 \item normal if $D(T)=D(T^*)$ and $T^*T=TT^*$
 \item symmetric if $T\subseteq T^*$
 \item self-adjoint if $T=T^*$
 \item positive if  $T=T^*$ and $\langle Tx,x\rangle \geq 0$ for all $x\in D(T)$.
\end{itemize}

If $M$ is a closed subspace of a Hilbert space $H$, then $P_M$ is the orthogonal projection $P:H\rightarrow H$ with range $M$ and $S_M$ denote the unit sphere of $M$.
\begin{prop}
Let $T\in \mathcal C(H_1,H_2)$ be densely defined. Then the following statements hold:
\begin{enumerate}
\item $N(T)=R(T^*)^{\bot}$
\item $N(T^*)=R(T)^{\bot}$.
\end{enumerate}
\end{prop}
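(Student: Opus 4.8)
The plan is to establish part (2) first by a direct computation with the defining relation of the adjoint, and then to deduce part (1) from (2) by duality, using the fact that a densely defined closed operator is reflexive, i.e. $T^{**}=T$.

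For (2), I would prove the equivalence $y\in N(T^*)\iff y\in R(T)^{\bot}$ by showing both inclusions. If $y\in N(T^*)$, then for every $x\in D(T)$ we have $\langle Tx,y\rangle=\langle x,T^*y\rangle=0$, so $y\perp R(T)$. Conversely, if $y\in R(T)^{\bot}$, then the linear functional $x\mapsto\langle Tx,y\rangle$ on $D(T)$ is identically zero, hence continuous, so by the very definition of $D(T^*)$ we get $y\in D(T^*)$; then $\langle x,T^*y\rangle=\langle Tx,y\rangle=0$ for all $x\in D(T)$, and since $D(T)$ is dense in $H_1$ this forces $T^*y=0$, i.e. $y\in N(T^*)$. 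Observe that this argument uses only that $T$ is densely defined.

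For (1), the idea is to apply the identity just proved in (2) to the operator $T^*$ in place of $T$. Since $T$ is densely defined and closed, $T^*$ is itself densely defined, so $T^{**}=(T^*)^*$ is well defined and, by the reflexivity of closed densely defined operators, coincides with $T$. Replacing $T$ by $T^*$ in (2) then yields $N(T^{**})=R(T^*)^{\bot}$, that is $N(T)=R(T^*)^{\bot}$. The step that genuinely needs the standing hypotheses — and the one I expect to require the most care — is exactly this last point: the inclusion $N(T)\subseteq R(T^*)^{\bot}$ follows directly from the adjoint relation and needs no closedness, but the reverse inclusion $R(T^*)^{\bot}\subseteq N(T)$ can fail when $T$ is merely closable rather than closed, since then $R(T^*)^{\bot}=N(\overline{T})$ may be strictly larger than $N(T)$ and its elements need not even lie in $D(T)$. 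Thus the closedness of $T$ — equivalently, $T^{**}=T$, or $G(T^*)=(WG(T))^{\bot}$ for the flip $W(u,v)=(v,-u)$ — is precisely what makes (1) valid, and the proof should invoke it at that juncture.
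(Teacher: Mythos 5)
Your proof is correct. The paper states this proposition in its preliminaries without any proof (it is quoted as a standard fact), so there is no argument in the source to compare against; your route --- proving $N(T^*)=R(T)^{\bot}$ directly from the defining relation of the adjoint and then obtaining $N(T)=R(T^{*})^{\bot}$ by applying that identity to $T^{*}$ together with $T^{**}=T$ --- is the standard one, and your remark that closedness of $T$ is needed precisely for the inclusion $R(T^{*})^{\bot}\subseteq N(T)$ (while part (2) and the inclusion $N(T)\subseteq R(T^{*})^{\bot}$ need only dense definedness) is accurate.
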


\begin{defn}\cite{ben, goldberg}\label{minmmodulus}
 Let $T\in \mathcal C(H_1,H_2)$ be densely defined. Then the quantity
  $\gamma(T):=\inf{\{\|Tx\|:x\in S_{C(T)}}\}$, is called the reduced minimum modulus of $T$.
\end{defn}

 \begin{defn}\label{geninv}(Moore-Penrose Inverse)\cite[Pages 314, 318-320]{ben}
Let $T\in \mathcal C(H_1,H_2)$ be densely defined. Then there exists
a unique densely defined operator $T^\dagger \in \mathcal
C(H_2,H_1)$ with domain $D(T^\dagger)=R(T)\oplus ^\bot R(T)^\bot$
and has the following properties:
\begin{enumerate}
\item $TT^\dagger y=P_{\overline{R(T)}}~y, ~\text{for all}~y\in D(T^\dagger)$.

\item $T^\dagger Tx=P_{N(T)^\bot} ~x, ~\text{for all}~x\in D(T)$.

\item $N(T^\dagger)=R(T)^\bot$.
\end{enumerate}
This unique operator $T^\dagger$ is called the \textit{Moore-Penrose inverse} of $T$.\\
The following property of $T^\dagger$ is also well known.
For every $y\in D(T^\dagger)$, let $$L(y):=\Big\{x\in D(T):
||Tx-y||\leq ||Tu-y||\quad \text{for all} \quad u\in D(T)\Big\}.$$
 Here any $u\in L(y)$ is called a \textit{least square solution} of the operator equation $Tx=y$. The vector  $x=T^\dagger y\in L(y),\,||T^\dagger y||\leq ||x||\quad \text{for all} \quad x\in L(y)$
 and it is called the  \textit{least square solution of minimal norm}.
 A different treatment of $T^\dagger$ is given in \cite[Pages 336, 339, 341]{ben},
 where it is called ``\textit{the Maximal Tseng generalized Inverse}".
\end{defn}
\begin{thm}\cite[Page 341]{ben}
Let $T\in \mathcal C(H_1,H_2)$ be densely defined. Then
\begin{enumerate}
\item $D(T^\dagger)=R(T)\oplus^\bot R(T)^\bot, \;\;
N(T^\dagger)=R(T)^\bot=N(T^*)$
\item $R(T^\dagger)=C(T)$
\item $T^\dagger$ is densely defined and $ T^\dagger \in \mathcal C(H_2,H_1)$
\item $T^{\dagger \dagger}=T$
\item $T^{* \dagger}=T^{\dagger *}$
\item $N(T^{* \dagger})=N(T)$
\item $T^*T$ and $T^\dagger T^{* \dagger}$ are positive and $(T^*T)^\dagger =T^\dagger T^{*
\dagger}$
\item $TT^*$ and $ T^{* \dagger}T^\dagger$ are positive and $(TT^*)^\dagger= T^{*
\dagger}T^\dagger$.
\end{enumerate}
\end{thm}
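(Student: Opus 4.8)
The plan is to derive the eight items from the material just stated, noting first that Definition~\ref{geninv} already supplies the existence, uniqueness, closedness, dense domain, the precise domain, and the three characterising properties of $T^\dagger$, so that several items are essentially unpackings. Items $(1)$ and $(3)$ are immediate from that definition, the only addition being the identity $R(T)^\bot=N(T^*)$, which is part of the Proposition preceding it; and item $(6)$ follows by applying property $(3)$ of Definition~\ref{geninv} to the densely defined closed operator $T^*$, which gives $N(T^{*\dagger})=R(T^*)^\bot$, and then invoking the Proposition to rewrite $R(T^*)^\bot=N(T)$. Item $(2)$ I would prove directly: for $y\in D(T^\dagger)$ property $(1)$ gives $T^\dagger y\in D(T)$ and $TT^\dagger y=P_{\overline{R(T)}}y\in R(T)\subseteq D(T^\dagger)$; feeding $x=T^\dagger y$ into property $(2)$ and using $N(T^\dagger)=R(T)^\bot$ forces $T^\dagger y=P_{N(T)^\bot}(T^\dagger y)$, so $T^\dagger y\in D(T)\cap N(T)^\bot=C(T)$, while conversely $x=T^\dagger(Tx)\in R(T^\dagger)$ for each $x\in C(T)$. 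The common device for the remaining items is the restriction $T_0:=T|_{C(T)}$, which is closed and injective with domain dense in $N(T)^\bot$ and range exactly $R(T)$; the argument for $(2)$ also shows $T^\dagger y=T_0^{-1}P_{\overline{R(T)}}y$ for every $y\in D(T^\dagger)$, where $T_0^{-1}:R(T)\to C(T)$ is closed.

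For item $(4)$: items $(1)$ and $(2)$ give $N(T^\dagger)^\bot=\overline{R(T)}$ and $R(T^\dagger)=C(T)$, hence $C(T^\dagger)=D(T^\dagger)\cap\overline{R(T)}=R(T)$ and $T^\dagger|_{C(T^\dagger)}=T_0^{-1}$. Running the previous factorisation with $T^\dagger$ in place of $T$ produces an operator with domain $R(T^\dagger)\oplus^\bot R(T^\dagger)^\bot=C(T)\oplus^\bot N(T)=D(T)$ that equals $(T_0^{-1})^{-1}=T_0$ on $C(T)$ and vanishes on $N(T)$; under the decomposition $D(T)=N(T)\oplus^\bot C(T)$ this is precisely $T$, so $T^{\dagger\dagger}=T$.

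Item $(5)$ is the delicate one. The plan is to verify that $T^{\dagger*}$ satisfies the three defining properties of the Moore--Penrose inverse of $T^*$ and then invoke the uniqueness clause of Definition~\ref{geninv}. The algebraic engine is that properties $(1)$ and $(2)$ read $TT^\dagger\subseteq P_{\overline{R(T)}}$ and $T^\dagger T\subseteq P_{N(T)^\bot}$, so their adjoints $(TT^\dagger)^*$ and $(T^\dagger T)^*$ are extensions of $P_{\overline{R(T)}}$ and $P_{N(T)^\bot}$ and hence equal to them (those being everywhere defined); combining this with the general inclusion $B^*A^*\subseteq(AB)^*$ collapses the mixed products $T^*T^{\dagger*}$ and $T^{\dagger*}T^*$ onto the projections $P_{N(T)^\bot}=P_{\overline{R(T^*)}}$ and $P_{\overline{R(T)}}=P_{N(T^*)^\bot}$ (the two equalities coming from the Proposition), which are exactly the right-hand sides that Definition~\ref{geninv} demands for $T^*$. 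The point that needs real care --- and which I expect to be the main obstacle --- is the \emph{exact} identification $D(T^{\dagger*})=R(T^*)\oplus^\bot R(T^*)^\bot$, together with the check that in each mixed product every vector of the relevant domain does lie in the domain of the outer factor; I would obtain this from the description $D(T^{\dagger*})=\{x:\ y\mapsto\langle T^\dagger y,x\rangle\ \text{is continuous on}\ D(T^\dagger)\}$, pushed through the carrier factorisations of $T^\dagger$ and of $T^*$.

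Finally, items $(7)$ and $(8)$ rest on $(5)$ and on the classical theorem of von Neumann that $T^*T$ is positive and self-adjoint (whence $N(T^*T)=N(T)$ and $\overline{R(T^*T)}=N(T)^\bot$). To identify $(T^*T)^\dagger$ with $T^\dagger T^{*\dagger}$ I would once more verify the three characterising properties; using $(5)$ and the characterising properties of $T$ and $T^*$, the computation reduces to the simplifications $T^{*\dagger}T^*(Tx)=Tx$ for $x\in D(T^*T)$ (since $Tx\in R(T)\subseteq N(T^*)^\bot$) and $TT^\dagger(T^{*\dagger}y)=T^{*\dagger}y$, and one checks en route that $D(T^\dagger T^{*\dagger})=R(T^*T)\oplus^\bot N(T)$, which is exactly $D((T^*T)^\dagger)$; positivity of $T^\dagger T^{*\dagger}$ is then immediate because $(5)$ rewrites it as $T^\dagger(T^\dagger)^*$. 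Item $(8)$ follows by applying $(7)$ to $T^*$ and using $T^{**}=T$. As in $(5)$, the recurring obstacle is pinning the domains of the products $T^\dagger T^{*\dagger}$ and $T^{*\dagger}T^\dagger$ down precisely, and this domain bookkeeping --- rather than the formal algebra --- is where the bulk of the careful work lies.
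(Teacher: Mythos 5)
The paper itself offers no proof of this theorem to compare against: it is quoted verbatim from \cite[Page 341]{ben} as a preliminary fact, so the only question is whether your blind outline would actually deliver the result. For items $(1)$, $(2)$, $(3)$, $(4)$ and $(6)$ it would: your derivation of $R(T^\dagger)=C(T)$ from the two defining identities (using $N(T^\dagger)=R(T)^\bot$ to kill the $R(T)^\bot$-component of $y$), the identification $T^\dagger y=T_0^{-1}P_{\overline{R(T)}}y$ with $T_0=T|_{C(T)}$, and the computation $C(T^\dagger)=D(T^\dagger)\cap\overline{R(T)}=R(T)$ leading to $T^{\dagger\dagger}=T$ are all correct and are the standard route; the appeal to $R(T)^\bot=N(T^*)$ and $R(T^*)^\bot=N(T)$ for $(1)$ and $(6)$ is likewise fine.

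For items $(5)$, $(7)$ and $(8)$, however, your text is a plan rather than a proof, and you say so yourself. The strategy --- verify the three characterising properties and invoke uniqueness, using $A\subseteq B\Rightarrow B^*\subseteq A^*$ to get $(TT^\dagger)^*=P_{\overline{R(T)}}$ and $(T^\dagger T)^*=P_{N(T)^\bot}$, then $B^*A^*\subseteq(AB)^*$ to control the mixed products --- is the right one and is essentially how Ben-Israel and Greville argue. But the step you flag as the ``main obstacle,'' namely the exact equality $D(T^{\dagger*})=R(T^*)\oplus^\bot R(T^*)^\bot$ together with the verification that $T^*y\in D(T^{\dagger*})$ for every $y\in D(T^*)$ (and the analogous domain equality $D(T^\dagger T^{*\dagger})=R(T^*T)\oplus^\bot N(T)$ in item $(7)$), is precisely where the whole content of these items lives, and it is asserted, not established. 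An inclusion such as $T^*T^{\dagger*}\subseteq P_{\overline{R(T^*)}}$ only gives the Moore--Penrose identity \emph{where the product happens to be defined}; without the domain identifications the uniqueness clause of Definition~\ref{geninv} cannot be invoked. So the proposal is sound in approach but incomplete exactly at the nontrivial points; to finish it you would need to carry out the announced analysis of $D(T^{\dagger*})$ via the factorisation $T^\dagger=T_0^{-1}P_{\overline{R(T)}}$ (showing the relevant functional is bounded precisely on $R(T^*)\oplus^\bot R(T^*)^\bot$), or else simply defer to the source, as the paper does.
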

\begin{prop}\label{equivalentclosedrange}\cite{ben,kato}
Let $T\in \mathcal C(H_1,H_2)$ be densely defined. Then the following statements are equivalent;
\begin{enumerate}
\item $R(T)$ is closed
\item $R(T^*)$ is closed
\item $T_0:=T|_{C(T)}$ has a bounded inverse
\item there exists a $k>0$ such that $\|Tx\|\geq k\|x\|$ for all $x\in C(T)$
\item $T^{\dagger}$ is bounded. In fact, $\|T^{\dagger}\|=\frac{1}{\gamma(T)}$
\item $R(T^*T)$ is closed
\item $R(TT^*)$ is closed.
\end{enumerate}
\end{prop}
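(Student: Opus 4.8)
The plan is to route every equivalence through the single operator $T_0 := T|_{C(T)}$ and the Moore--Penrose inverse, using the structural facts already on the table: $D(T)=N(T)\oplus^\bot C(T)$, $R(T^\dagger)=C(T)$, $D(T^\dagger)=R(T)\oplus^\bot R(T)^\bot$, $N(T^\dagger)=R(T)^\bot$, together with $T^{\dagger\dagger}=T$, $T^{*\dagger}=T^{\dagger*}$, $(T^*T)^\dagger=T^\dagger T^{*\dagger}$ and $(TT^*)^\dagger=T^{*\dagger}T^\dagger$. Two soft facts will be used repeatedly: (a) a closed operator $A$ satisfying $\|Ax\|\le C\|x\|$ on $D(A)$ has $D(A)$ closed, since a convergent sequence in $D(A)$ has Cauchy, hence convergent, image and closedness of $A$ returns the limit point to $G(A)$; and (b) $T_0$ is closed (because $N(T)^\bot$ is closed), injective (because $N(T)\cap C(T)=\{0\}$) and has $R(T_0)=R(T)$, so $T_0^{-1}\colon R(T)\to C(T)$ is a well-defined closed operator.

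The core cycle is $(1)\Leftrightarrow(3)\Leftrightarrow(4)\Leftrightarrow(5)$. For $(1)\Rightarrow(3)$, if $R(T)$ is closed it is a Banach space on which $T_0^{-1}$ is closed and everywhere defined, so the closed graph theorem gives $T_0^{-1}\in\mathcal B(R(T),H_1)$; for $(3)\Rightarrow(1)$, $T_0^{-1}$ is closed and bounded on its domain $R(T)$, so $R(T)$ is closed by (a). The step $(3)\Leftrightarrow(4)$ is the tautology $\|T_0^{-1}\|\le 1/k\iff\|Tx\|\ge k\|x\|$ on $C(T)$, using that $T_0$ is a bijection onto $R(T)$. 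For $(4)\Leftrightarrow(5)$ together with the norm formula, note that $T^\dagger$ annihilates $R(T)^\bot$ and agrees with $T_0^{-1}$ on $R(T)$ (indeed $T^\dagger T x=P_{N(T)^\bot}x=x$ for $x\in C(T)$); splitting $y\in D(T^\dagger)$ along $R(T)\oplus^\bot R(T)^\bot$ then gives $\|T^\dagger\|=\|T_0^{-1}\|=\bigl(\inf_{x\in S_{C(T)}}\|Tx\|\bigr)^{-1}=\gamma(T)^{-1}$, so $T^\dagger$ is bounded exactly when $\gamma(T)>0$, which is $(4)$.

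The remaining equivalences are adjoint bookkeeping. For $(1)\Leftrightarrow(2)$: apply $(1)\Leftrightarrow(5)$ to both $T$ and $T^*$ (each densely defined and closed), and use $(T^*)^\dagger=(T^\dagger)^*$ together with the fact that a densely defined closed operator is bounded iff its adjoint is, to chain $R(T)$ closed $\iff T^\dagger\in\mathcal B\iff (T^*)^\dagger\in\mathcal B\iff R(T^*)$ closed. For $(1)\Leftrightarrow(6)$: $T^*T$ is densely defined and self-adjoint, so $(1)\Leftrightarrow(5)$ applies to it, giving $R(T^*T)$ closed $\iff (T^*T)^\dagger=T^\dagger T^{*\dagger}=T^\dagger(T^\dagger)^*$ bounded; the forward direction is immediate from boundedness of $T^\dagger$, and for the converse one uses that this composite is positive, so being bounded on its domain it has (by (a), plus density) full domain, which forces $(T^\dagger)^*$ to be everywhere defined, hence bounded, hence $T^\dagger=(T^\dagger)^{**}$ bounded. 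Finally $(1)\Leftrightarrow(7)$ follows by applying $(1)\Leftrightarrow(6)$ to $T^*$ in place of $T$ and combining with $(1)\Leftrightarrow(2)$, since $(T^*)^*T^*=TT^*$.

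The step I expect to be the main obstacle is the converse direction in $(6)$ and $(7)$: passing from boundedness of the composite $T^\dagger T^{*\dagger}$ (resp. $T^{*\dagger}T^\dagger$) back to boundedness of $T^\dagger$ itself. For everywhere-defined bounded operators this is just the $C^\ast$-identity $\|AA^*\|=\|A\|^2$, but a priori $T^\dagger$ may be unbounded, so the argument must go through domains — a positive operator bounded on its domain has closed, hence full, domain — and one must keep careful track of which operators are automatically densely defined and closed so that $A^{**}=A$ may legitimately be invoked. Everything else reduces to bookkeeping with the decomposition $D(T)=N(T)\oplus^\bot C(T)$ and the listed properties of $T^\dagger$.
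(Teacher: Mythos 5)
The paper does not prove this proposition at all: it is stated as a preliminary and attributed to the references \cite{ben,kato}, so there is no in-paper argument to compare against. Your proposal is correct and is essentially the standard proof from those sources: routing $(1)\Leftrightarrow(3)\Leftrightarrow(4)\Leftrightarrow(5)$ through $T_0=T|_{C(T)}$ (closed, injective, with $R(T_0)=R(T)$ and $T^\dagger|_{R(T)}=T_0^{-1}$) via the closed graph theorem and the observation that a closed operator bounded on its domain has closed domain, and then obtaining $(2)$, $(6)$, $(7)$ by applying the core equivalence to $T^*$, $T^*T$ and $TT^*$ together with $T^{*\dagger}=T^{\dagger*}$ and $(T^*T)^\dagger=T^\dagger T^{*\dagger}$. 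The one genuinely delicate step you identify --- recovering boundedness of $T^\dagger$ from boundedness of $T^\dagger T^{\dagger*}$ on its domain --- is handled correctly: self-adjointness (hence closedness) plus dense domain of the positive product forces its domain, and therefore $D(T^{\dagger*})$, to be all of $H$, after which the closed graph theorem and $T^\dagger=T^{\dagger**}$ finish the argument.
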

\begin{prop}\label{productadjoint}\cite[page 300]{riesznagy}
Let $T_1\in \mathcal C(H_1,H_3), ,T_2\in \mathcal C(H_1,H_2)$ be densely defined such that $D(T_1T_2)$ is dense in $H_2$. Then
\begin{enumerate}
\item $T_2^*T_1^*\subseteq (T_1T_2)^*$
\item If $T_1$ is bounded, Then $(T_1T_2)^*=T_2^*T_1^*$
\end{enumerate}
\end{prop}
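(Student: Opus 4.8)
The plan is to obtain both inclusions by the usual device of unwinding the defining relation $\langle Tx,y\rangle=\langle x,T^{*}y\rangle$ of an adjoint, while keeping careful track of which domain each vector must lie in; note that the density hypothesis on $D(T_1T_2)$ is exactly what makes $(T_1T_2)^{*}$ a well-defined operator to begin with.

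For part (1), I would first recall that $z\in D(T_2^{*}T_1^{*})$ means precisely $z\in D(T_1^{*})$ and $T_1^{*}z\in D(T_2^{*})$. Fixing such a $z$ and an arbitrary $x\in D(T_1T_2)$ --- so $x\in D(T_2)$ and $T_2x\in D(T_1)$ --- the computation
\[
\langle T_1T_2x,z\rangle=\langle T_2x,T_1^{*}z\rangle=\langle x,T_2^{*}T_1^{*}z\rangle
\]
is justified step by step: the first equality because $T_2x\in D(T_1)$ and $z\in D(T_1^{*})$, the second because $x\in D(T_2)$ and $T_1^{*}z\in D(T_2^{*})$. Since the right-hand side equals $\langle x,w\rangle$ with $w:=T_2^{*}T_1^{*}z$ fixed, the functional $x\mapsto\langle T_1T_2x,z\rangle$ is continuous on $D(T_1T_2)$, so by the definition of the adjoint $z\in D\big((T_1T_2)^{*}\big)$ and $(T_1T_2)^{*}z=T_2^{*}T_1^{*}z$; this is the claimed inclusion $T_2^{*}T_1^{*}\subseteq(T_1T_2)^{*}$.

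For part (2) I would use that when $T_1$ is bounded it is everywhere defined, hence $T_1^{*}$ is everywhere defined (and bounded) and $D(T_1T_2)=D(T_2)$. It then remains to prove the reverse inclusion: given $z\in D\big((T_1T_2)^{*}\big)$, one has $z\in D(T_1^{*})$ automatically, and for every $x\in D(T_2)$,
\[
\langle T_2x,T_1^{*}z\rangle=\langle T_1T_2x,z\rangle=\langle x,(T_1T_2)^{*}z\rangle .
\]
Hence $x\mapsto\langle T_2x,T_1^{*}z\rangle$ is continuous, so $T_1^{*}z\in D(T_2^{*})$ and $T_2^{*}T_1^{*}z=(T_1T_2)^{*}z$, i.e. $z\in D(T_2^{*}T_1^{*})$. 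Combining with part (1) gives $D\big((T_1T_2)^{*}\big)=D(T_2^{*}T_1^{*})$ together with agreement of the operators there, which is $(T_1T_2)^{*}=T_2^{*}T_1^{*}$.

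I do not expect a genuine obstacle: the content is entirely the domain bookkeeping. The one point worth articulating is why boundedness of $T_1$ is needed in (2) but not in (1): without it, a vector $z\in D\big((T_1T_2)^{*}\big)$ need not even lie in $D(T_1^{*})$, so $T_2^{*}T_1^{*}z$ can be meaningless and the inclusion in (1) can be proper.
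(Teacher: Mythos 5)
Your proof is correct and is the standard argument; the paper itself offers no proof of this proposition, citing Riesz--Nagy instead, and your domain-by-domain unwinding of the defining relation of the adjoint is exactly the argument found there. (The only thing worth noting is that the paper's statement contains typos --- $T_1$ should map $H_2\to H_3$ and $D(T_1T_2)$ should be dense in $H_1$ --- which you have silently and correctly repaired.)
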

\begin{prop}\cite{riesznagy}\label{productsumrelation}
If $T_i\in \mathcal C(H_1,H_2)\;(i=1,2)$ and $S\in \mathcal L(H_2,H_3)$, then $VT_1+VT_2\subseteq V(T_1+T_2)$.
\end{prop}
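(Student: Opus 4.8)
The conclusion is an inclusion of (possibly unbounded) linear operators $A\subseteq B$, which means $D(A)\subseteq D(B)$ together with $Ax=Bx$ for every $x\in D(A)$; here I write $V$ for the given operator in $\mathcal L(H_2,H_3)$. So the plan is simply to unwind the two domains and then to check that the two operators agree on the smaller one. Using the standard conventions $D(VT_i)=\{x\in D(T_i):T_ix\in D(V)\}$ and $D(A+B)=D(A)\cap D(B)$, the first step is to record
\[
D(VT_1+VT_2)=\{x\in D(T_1)\cap D(T_2):T_1x\in D(V)\ \text{and}\ T_2x\in D(V)\}
\]
and
\[
D\big(V(T_1+T_2)\big)=\{x\in D(T_1)\cap D(T_2):T_1x+T_2x\in D(V)\}.
\]

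For the domain inclusion I would take $x$ in the left-hand domain: then $x\in D(T_1)\cap D(T_2)=D(T_1+T_2)$, and since $T_1x$ and $T_2x$ both lie in the subspace $D(V)$, so does their sum $(T_1+T_2)x=T_1x+T_2x$; hence $x\in D\big(V(T_1+T_2)\big)$. For the agreement, for such an $x$ the linearity of $V$ gives
\[
(VT_1+VT_2)x=VT_1x+VT_2x=V\big(T_1x+T_2x\big)=\big(V(T_1+T_2)\big)x,
\]
and the claimed inclusion follows.

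There is no genuine obstacle here --- in particular the closedness of $T_1$ and $T_2$ is never used --- and the only point requiring care is the domain bookkeeping: the inclusion may be strict, since $(T_1+T_2)x\in D(V)$ does not force $T_1x$ and $T_2x$ to lie in $D(V)$ separately.
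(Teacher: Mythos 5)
Your proof is correct: the domain bookkeeping and the use of linearity of $V$ (together with $D(V)$ being a subspace) is exactly the standard argument, and your remark that closedness of $T_1,T_2$ is irrelevant and that the inclusion can be strict is accurate. The paper gives no proof of its own here --- it simply cites Riesz--Nagy (and, incidentally, states the hypothesis with the letter $S$ but the conclusion with $V$) --- so there is nothing to compare against beyond noting that yours is the expected argument.
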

\begin{thm}\cite{schock,gr92,ped,riesznagy,rud}\label{closurerelations}
Let $T\in \mathcal C(H_1,H_2)$ be densely defined. Then
\begin{enumerate}
\item $T^*T$ is positive and $\overline{G(T|_{D(T^*T)})}=G(T)$
\item $(I+TT^*)^{-1}\in \mathcal B(H_1)$ and $||(I+T^*T)^{-1}||\leq
1$
\item $T(I+T^*T)^{-1}\in \mathcal B(H_1)$ and $(I+TT^*)^{-1}T\subset
T(I+T^*T)^{-1}$
\item $||T(I+T^*T)^{-1}||\leq \displaystyle \frac{1}{2}$
\item $\Big(T(I+T^*T)^{-1} \Big)^*=T^*(I+TT^*)^{-1}$
\item $\Big(T^*(I+TT^*)^{-1} \Big)^*=T(I+T^*T)^{-1}$.
\end{enumerate}
\end{thm}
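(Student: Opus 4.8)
I would build the whole proof on von Neumann's theorem. \emph{Step 1 (part (2)).} Since $T$ is closed and densely defined, $T^*$ is densely defined with $T^{**}=T$, and $H_1\times H_2$ splits orthogonally as $\{(x,Tx):x\in D(T)\}\oplus^\bot\{(-T^*u,u):u\in D(T^*)\}$. Decomposing $(h,0)$ along this sum, for $h\in H_1$, yields $x\in D(T)$ and $u\in D(T^*)$ with $u=-Tx$ and $(I+T^*T)x=h$; hence $I+T^*T$ maps $D(T^*T)$ onto $H_1$. Since $\langle(I+T^*T)x,x\rangle=\|x\|^2+\|Tx\|^2\ge\|x\|^2$, the operator $I+T^*T$ is also injective with $\|(I+T^*T)x\|\ge\|x\|$, so it is bijective with bounded inverse of norm $\le1$; the same argument applied to $T^*$ handles $I+TT^*$. \emph{Step 2 (part (1)).} The core statement comes next: $G(T)$ is closed, and any $(x,Tx)$ orthogonal in $H_1\times H_2$ to every $(u,Tu)$ with $u\in D(T^*T)$ satisfies $\langle x,(I+T^*T)u\rangle=0$ for all such $u$; surjectivity of $I+T^*T$ then forces $x=0$, so $G(T|_{D(T^*T)})$ is dense in $G(T)$ and, projecting to the first coordinate, $D(T^*T)$ is dense in $H_1$. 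Now $I+T^*T$ is densely defined, symmetric (since $\langle(I+T^*T)u,v\rangle=\langle u,v\rangle+\langle Tu,Tv\rangle$), and bijective, hence self-adjoint; together with $\langle T^*Tx,x\rangle=\|Tx\|^2\ge0$ this shows $T^*T$ is positive.

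\emph{Step 3 (parts (3), (4)).} Fix $h\in H_1$ and set $x=(I+T^*T)^{-1}h\in D(T^*T)$. Then $\langle h,x\rangle=\|x\|^2+\|Tx\|^2$, so Cauchy--Schwarz gives $\|x\|^2+\|Tx\|^2\le\|h\|\,\|x\|$; reading this as a quadratic inequality in $\|x\|$ forces $\|Tx\|\le\tfrac12\|h\|$, which yields both the boundedness of $T(I+T^*T)^{-1}$ and the bound $\tfrac12$. For the inclusion $(I+TT^*)^{-1}T\subset T(I+T^*T)^{-1}$, I would take $x\in D(T)$, put $u=(I+T^*T)^{-1}x\in D(T^*T)$, and note $T^*Tu=x-u\in D(T)$ (because $x\in D(T)$ and $u\in D(T^*T)\subseteq D(T)$); applying $T$ gives $TT^*Tu=Tx-Tu$, i.e. $(I+TT^*)Tu=Tx$, hence $T(I+T^*T)^{-1}x=(I+TT^*)^{-1}Tx$. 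The two operators thus agree on $D(T)$, while $T(I+T^*T)^{-1}$ is everywhere defined.

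\emph{Step 4 (parts (5), (6)).} Since $I+T^*T$ is self-adjoint, $(I+T^*T)^{-1}$ is bounded and self-adjoint, so it suffices to show the everywhere-defined bounded operators $\bigl(T(I+T^*T)^{-1}\bigr)^*$ and $T^*(I+TT^*)^{-1}$ agree on the dense set $D(T^*)$. For $h\in H_1$ and $k\in D(T^*)$,
\[
\bigl\langle T(I+T^*T)^{-1}h,k\bigr\rangle=\bigl\langle(I+T^*T)^{-1}h,T^*k\bigr\rangle=\bigl\langle h,(I+T^*T)^{-1}T^*k\bigr\rangle=\bigl\langle h,T^*(I+TT^*)^{-1}k\bigr\rangle,
\]
the last equality being the inclusion of Step 3 applied to $T^*$ in place of $T$ (legitimate since $T^{**}=T$). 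This gives (5), and (6) follows by applying (5) to $T^*$.

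The main obstacle is Step 1: setting up the orthogonal decomposition of $H_1\times H_2$ and extracting surjectivity of $I+T^*T$ --- this is von Neumann's theorem and is the only genuinely nontrivial input; everything afterward is bookkeeping built on it. The recurring subtlety throughout is domain tracking --- verifying, at each manipulation, that compositions such as $TT^*Tu$ or $(I+T^*T)^{-1}T^*k$ are defined on the vectors at hand and that $T$ may legitimately be pulled out of the sums in question.
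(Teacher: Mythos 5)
Your proof is correct: the orthogonal decomposition $H_1\times H_2=G(T)\oplus^\bot\{(-T^*u,u):u\in D(T^*)\}$ applied to $(h,0)$ gives surjectivity of $I+T^*T$, and the subsequent domain-tracking in parts (3)--(6) all checks out. The paper offers no proof of its own here --- it quotes this theorem as a known preliminary with citations --- and your argument is exactly the standard von Neumann proof found in those references, so there is nothing to contrast.
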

\section{Perturbation of Moore-Penrose inverses-I}

%\begin{lemma}[Neumann's lemma]\index{Neumann's lemma}\labelneumannseries\cite[Theorem 8.1, Page 69]{ggk1}
% Let $T\in \mathcal B(H)$ be such that $||T||<1$. Then $(I-T)$ is invertible,$(I-T)^{-1}=\sum_{k=0}^\infty T^k$ and $$||(I-T)^{-1}||\leq \frac{1}{1-||T||}.$$
%\end{lemma}

\begin{prop}\label{sumclosedrange1}
Let $T\in \mathcal C(H_1,H_2)$ be densely defined and have a closed
range. Let $S\in \mathcal B(H_1,H_2)$. Then
\begin{enumerate}
\item $ST^\dagger T=S|_{D(T)} \Leftrightarrow  N(T)\subseteq N(T+S) \Leftrightarrow N(T)\subseteq N(S)$
\item If $||ST^\dagger ||<1$ and $ST^\dagger T=S|_{D(T)}$, then $N(T+S)=N(T)$.
\end{enumerate}
\end{prop}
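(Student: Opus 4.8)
The plan is to prove the three conditions in (1) equivalent by elementary manipulations with the defining identity $T^\dagger T=P_{N(T)^\bot}|_{D(T)}$, and then to obtain (2) by applying $T^\dagger$ to the equation that defines $N(T+S)$ and using $\|ST^\dagger\|<1$ as a contraction-type estimate. Before starting I would record the facts used repeatedly: since $S$ is everywhere defined and bounded, $D(T+S)=D(T)$ and $D(ST^\dagger T)=D(T^\dagger T)=D(T)$; by closedness of $R(T)$ and Proposition~\ref{equivalentclosedrange} the operator $T^\dagger$, hence $ST^\dagger$, is bounded so that $\|ST^\dagger\|$ is a finite number; and because $D(T)=N(T)\oplus^\bot C(T)$, for every $x\in D(T)$ both $P_{N(T)}x$ and $P_{N(T)^\bot}x=T^\dagger Tx$ again lie in $D(T)$, while $\{P_{N(T)}x:x\in D(T)\}=N(T)$ since $N(T)\subseteq D(T)$.

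For (1): using property (2) of $T^\dagger$, the identity $ST^\dagger T=S|_{D(T)}$ says $S\bigl(x-P_{N(T)^\bot}x\bigr)=0$, i.e. $SP_{N(T)}x=0$, for all $x\in D(T)$; since $P_{N(T)}$ maps $D(T)$ onto $N(T)$, this is equivalent to $Sz=0$ for all $z\in N(T)$, i.e. $N(T)\subseteq N(S)$. For the second equivalence, on $D(T)=D(T+S)$ one has $S=(T+S)-T$, so for $x\in N(T)$ the statements $Sx=0$ and $(T+S)x=0$ coincide; hence $N(T)\subseteq N(S)\iff N(T)\subseteq N(T+S)$. This proves (1).

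For (2): by (1) the hypotheses give $N(T)\subseteq N(T+S)$, so it remains to show $N(T+S)\subseteq N(T)$. Let $x\in N(T+S)$ and set $y:=Tx=-Sx$. Then $y\in R(T)\subseteq D(T^\dagger)$, and $T^\dagger y=T^\dagger Tx=P_{N(T)^\bot}x\in D(T)$. Since $N(T)\subseteq N(S)$ by (1), $S$ annihilates $P_{N(T)}x$, so $Sx=SP_{N(T)^\bot}x=S\bigl(T^\dagger y\bigr)=(ST^\dagger)y$. Therefore $y=-Sx=-(ST^\dagger)y$, whence $\|y\|\le\|ST^\dagger\|\,\|y\|$; as $\|ST^\dagger\|<1$ this forces $y=0$, i.e. $Tx=0$ and $x\in N(T)$. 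Combined with (1), $N(T+S)=N(T)$.

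The argument is mostly bookkeeping, and the only point needing care is keeping track of which orthogonal-projection pieces of a vector remain in $D(T)$ — all of them do, thanks to $D(T)=N(T)\oplus^\bot C(T)$. The one genuinely useful move is to apply $T^\dagger$ to the relation $Tx=-Sx$ rather than to work with $x$ itself: this converts membership in $N(T+S)$ into the fixed-point equation $y=-(ST^\dagger)y$, to which the norm hypothesis applies at once. Closedness of $R(T)$ enters only to make $ST^\dagger$ a bounded operator so that this estimate is meaningful.
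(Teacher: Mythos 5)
Your proposal is correct and follows essentially the same route as the paper: part (1) via $T^\dagger T=P_{N(T)^\bot}$ on $D(T)$ (the paper writes this as the decomposition $x=u+v$ with $u\in N(T)$, $v\in C(T)$), and part (2) by converting $x\in N(T+S)$ into $(I+ST^\dagger)Tx=0$ and using $\|ST^\dagger\|<1$ — your fixed-point phrasing $y=-(ST^\dagger)y$ is just the injectivity of $I+ST^\dagger$ made explicit. No gaps.
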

\begin{proof}
It is easy to see that $N(T)\subseteq N(T+S)\Leftrightarrow
N(T)\subseteq N(S)$.

Suppose $ST^\dagger T=S|_{D(T)}$. If  $x\in N(T)$, then
$Sx=ST^\dagger Tx=0$. Hence $N(T)\subseteq N(S)$.

Now, for the  converse part, let us assume that $N(T)\subseteq
N(S)$.  Let $x\in D(T)$. Then $x=u+v$, where $u\in N(T), v\in C(T)$.
Then
\begin{align*}
ST^\dagger Tx = ST^\dagger Tv =SP_{\overline{R(T^\dagger)}}v &=SP_{\overline{C(T)}}v \\
              &=Sv \\
              &=S(u+v), \;(\text{ since}\; N(T)\subseteq N(S)) \\
              &=Sx.
\end{align*}

To prove $(2)$, in view of  $(1)$, it is enough to prove that
$N(T+S)\subseteq N(T)$. If  $x\in N(T+S)$,  by our assumption, we
have $Tx+ST^\dagger Tx=0$. That is $(I+ST^\dagger)Tx=0$. Since
$(I+ST^\dagger )^{-1}\in \mathcal B(H_1)$, we get a conclusion that
$Tx=0$ and hence $x\in N(T)$.
\end{proof}
\begin{thm}\label{sumclosedrange3}
Let $T\in \mathcal C(H_1,H_2)$ be densely defined operator with a
closed range and $S\in \mathcal B(H_1,H_2)$ be such that
\begin{itemize}
\item[(a)]
$||S||<\displaystyle \frac{1}{||T^\dagger||}$ and
\item[(b)] $N(T)\subseteq N(T+S)$.
\end{itemize}
Then $R(T+S)$ is closed.
\end{thm}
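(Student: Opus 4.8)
\section*{Proof proposal}

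The plan is to verify the lower-bound criterion for closed range, namely item (4) of Proposition~\ref{equivalentclosedrange}, applied to the operator $T+S$. First I would record the structural facts: since $T$ is closed and $S\in\mathcal B(H_1,H_2)$, the operator $T+S$ is again closed with $D(T+S)=D(T)$, hence densely defined, so Proposition~\ref{equivalentclosedrange} is available for it. The goal then reduces to producing a constant $k>0$ with $\|(T+S)x\|\ge k\|x\|$ for every $x\in C(T+S)=D(T)\cap N(T+S)^{\bot}$.

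The key step is a carrier-inclusion observation. Condition (b) says $N(T)\subseteq N(T+S)$, so taking orthogonal complements gives $N(T+S)^{\bot}\subseteq N(T)^{\bot}$, and therefore
\[
C(T+S)=D(T)\cap N(T+S)^{\bot}\subseteq D(T)\cap N(T)^{\bot}=C(T).
\]
Now, since $R(T)$ is closed, Proposition~\ref{equivalentclosedrange} (items (4) and (5)) yields $\|Tx\|\ge\gamma(T)\|x\|$ for all $x\in C(T)$, where $\gamma(T)=1/\|T^{\dagger}\|$. Combining this with the reverse triangle inequality, for $x\in C(T+S)\subseteq C(T)$ we get
\[
\|(T+S)x\|\ge\|Tx\|-\|Sx\|\ge\gamma(T)\|x\|-\|S\|\,\|x\|=\bigl(\gamma(T)-\|S\|\bigr)\|x\|.
\]
By hypothesis (a), $\|S\|<1/\|T^{\dagger}\|=\gamma(T)$, so $k:=\gamma(T)-\|S\|>0$, and the lower bound holds with this $k$. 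Invoking the implication (4)$\Rightarrow$(1) of Proposition~\ref{equivalentclosedrange} for $T+S$ finishes the argument.

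The only genuinely delicate point is the carrier inclusion $C(T+S)\subseteq C(T)$: without condition (b) the null spaces need not be nested, $N(T+S)^{\bot}$ could meet $C(T)^{\bot}=N(T)\cap D(T)$ nontrivially, and the clean estimate $\|Tx\|\ge\gamma(T)\|x\|$ on $C(T+S)$ would fail. Everything else—closedness of $T+S$, the reverse triangle inequality, the positivity of $k$—is routine once that inclusion is in hand.
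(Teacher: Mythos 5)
Your proposal is correct and follows essentially the same route as the paper: establish the carrier inclusion $C(T+S)\subseteq C(T)$ from hypothesis (b), apply the reverse triangle inequality together with $\|Tx\|\geq\gamma(T)\|x\|$ on $C(T)$, and invoke the lower-bound criterion of Proposition~\ref{equivalentclosedrange}. Your write-up is in fact slightly more careful than the paper's, since you justify the carrier inclusion explicitly rather than asserting it.
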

\begin{proof}
Since $R(T)$ is closed, we have $||Tx||\geq \gamma(T)\; ||x||$ for
each  $x\in C(T)$.  Since $C(T+S)\subseteq C(T)$, by the triangle inequality, for all $x\in
C(T+S)$,
 \begin{align*}
||Tx+Sx||&\geq \left|||Tx||-||Sx||\right|\\
         &\geq \left|\gamma(T)\;||x||-||S||\,||x||\right| \\
         &\geq (\gamma(T)-||S||)\,||x||.
\end{align*}
By Proposition \ref{equivalentclosedrange} and our assumption $\gamma(T)-||S||>0$.
 Again  by Proposition \ref{equivalentclosedrange}, $R(T+S)$ is closed.
\end{proof}
\begin{prop}\label{sumclosedrange2}
Let $T\in \mathcal C(H_1,H_2)$ be densely defined and have a closed
range. Let $S\in \mathcal B(H_1,H_2)$. Then
\begin{enumerate}
\item $TT^\dagger S=S $ \quad  if and only if \quad $ R(S)\subseteq R(T)$
\item If $TT^\dagger S=S$, then  $R(T+S)\subseteq R(T)$
\item  If $||T^\dagger S ||<1$ \quad and  \quad $TT^\dagger S=S$, then $R(T+S)=R(T)$.
\end{enumerate}
\end{prop}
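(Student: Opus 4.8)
The plan is to treat the three assertions in order; the first two are short and the third carries the substance.

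Since $R(T)$ is closed, Proposition~\ref{equivalentclosedrange} tells us $T^\dagger$ is bounded, and Definition~\ref{geninv} gives $D(T^\dagger)=R(T)\oplus^\bot R(T)^\bot=H_2$, so $T^\dagger\in\mathcal B(H_2,H_1)$ and $TT^\dagger=P_{R(T)}$, the orthogonal projection of $H_2$ onto $R(T)$. Also $T^\dagger S$ is everywhere defined and bounded, and $T^\dagger Sx\in R(T^\dagger)=C(T)\subseteq D(T)$ for every $x\in H_1$, so $TT^\dagger S$ is a well-defined everywhere-defined operator, equal to $P_{R(T)}S$. For (1), the equation $TT^\dagger S=S$ then reads $P_{R(T)}Sx=Sx$ for all $x\in H_1$, which holds if and only if $Sx\in R(T)$ for all $x$, i.e. $R(S)\subseteq R(T)$. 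For (2), assume $TT^\dagger S=S$, so $R(S)\subseteq R(T)$ by (1); since $S$ is everywhere defined, $D(T+S)=D(T)$, and for $x\in D(T)$ we get $(T+S)x=Tx+Sx\in R(T)$ because $R(T)$ is a subspace, whence $R(T+S)\subseteq R(T)$. (This step does not use $\|T^\dagger S\|<1$.)

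For (3), set $\phi:=I+T^\dagger S\in\mathcal B(H_1)$; the hypothesis $\|T^\dagger S\|<1$ makes $\phi$ invertible in $\mathcal B(H_1)$ via the Neumann series. The key is the factorization $T+S=T\phi$ on $D(T)$: using $S=TT^\dagger S$ together with $T^\dagger Sx\in C(T)\subseteq D(T)$, for $x\in D(T)$ both summands of $\phi x=x+T^\dagger Sx$ lie in $D(T)$, so $\phi x\in D(T)$ and $(T+S)x=Tx+T(T^\dagger Sx)=T\phi x$. Next I would check that $\phi$ maps $D(T)$ \emph{onto} $D(T)$: from $\phi^{-1}=I-T^\dagger S\phi^{-1}$ we get, for $w\in D(T)$, that $\phi^{-1}w=w-T^\dagger S\phi^{-1}w$, whose second term lies in $R(T^\dagger)=C(T)\subseteq D(T)$, so $\phi^{-1}w\in D(T)$. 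Hence $\phi$ restricts to a bijection of $D(T)$, and
\begin{equation*}
R(T+S)=\{T\phi x:x\in D(T)\}=\{Ty:y\in\phi(D(T))\}=\{Ty:y\in D(T)\}=R(T),
\end{equation*}
which (together with (2)) gives $R(T+S)=R(T)$.

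The main obstacle is the domain bookkeeping in (3): because $T$ is unbounded, $D(T)$ is a proper, non-closed subspace, so one cannot conclude $\phi^{-1}w\in D(T)$ by expanding $\phi^{-1}w=\sum_{n\ge 0}(-T^\dagger S)^nw$ and noting each term lies in $D(T)$, since the partial sums need not converge in a topology under which $D(T)$ is closed. The remedy above is to use the closed-form identity $\phi^{-1}=I-T^\dagger S\phi^{-1}$, which confines the only potentially troublesome part of $\phi^{-1}w$ to $R(T^\dagger)\subseteq D(T)$; the remaining verifications are routine.
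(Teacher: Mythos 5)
Your proof is correct and follows essentially the same route as the paper: part (1) via $TT^\dagger=P_{R(T)}$, part (2) from $R(S)\subseteq R(T)$, and part (3) from the factorization $T+S=T(I+T^\dagger S)$ together with the invertibility of $I+T^\dagger S$. In fact your treatment of the domain issue in (3) is more careful than the paper's, which simply asserts that the preimage $u$ of $x$ under $I+T^\dagger S$ lies in $D(T)$; your identity $\phi^{-1}=I-T^\dagger S\phi^{-1}$ supplies exactly the justification the paper leaves implicit.
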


\begin{proof}
 If $TT^\dagger S=S $, then it is obvious that $R(S)\subseteq R(T)$.

 On the other hand, if $R(S)\subseteq R(T)$, we have $TT^\dagger S =P_{R(T)}S=S$.

To prove $(2)$, let us assume that $TT^\dagger S=S$. Then by Proposition \ref{productsumrelation}, $$T+S= T+TT^\dagger S \subseteq T(I+T^\dagger
S).$$ That is $R(T+S)\subseteq R(T)$.

One part of  proof of $(3)$, namely, $R(T+S)\subseteq R(T)$ follows
from $(2)$.
 The condition $||T^\dagger S ||<1$ implies that $(I+T^\dagger S)^{-1}\in \mathcal B(H_1)$.
 Hence, if $y=Tx$ for some $x\in D(T)$, then by the surjectivity of $I+T^\dagger S$, there exists a $u\in D(T)$ such that  $x=(I+T^\dagger S)u$ .
 This  shows that  $y=T(I+T^\dagger S)u=Tu+Su \in R(T+S)$.
\end{proof}
\begin{thm}
Let $T\in \mathcal C(H_1,H_2)$ be densely defined and  have a closed
range and $S\in \mathcal B(H_1,H_2)$ be  such that
\begin{itemize}
\item[1.] $||T^\dagger S||<1$ \quad and
\item [2.]$TT^\dagger S=S$.
\end{itemize}
Then $R(T+S)$ is closed.
\end{thm}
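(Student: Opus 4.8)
The plan is to observe that this statement is an immediate consequence of Proposition \ref{sumclosedrange2}, so the proof should be very short. The two standing hypotheses, $\|T^\dagger S\|<1$ and $TT^\dagger S=S$, are exactly the hypotheses of part (3) of that proposition, which asserts $R(T+S)=R(T)$. Since $R(T)$ is closed by assumption, $R(T+S)$ is closed. In other words, the substantive work has already been carried out in establishing Proposition \ref{sumclosedrange2}, and the present theorem is really a corollary of it.

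Before invoking that, I would first record that $T+S$ is again a densely defined closed operator: $S$ is everywhere defined and bounded, so $D(T+S)=D(T)$ is dense in $H_1$, and the sum of a closed operator and a bounded operator is closed. Hence "closed range" is meaningful for $T+S$ and Proposition \ref{equivalentclosedrange} is available for it. Then I would simply unwind Proposition \ref{sumclosedrange2}(3): the condition $TT^\dagger S=S$ gives $T+S\subseteq T(I+T^\dagger S)$ and hence $R(T+S)\subseteq R(T)$, while $\|T^\dagger S\|<1$ makes $I+T^\dagger S$ boundedly invertible, so every $y=Tx$ can be written as $y=T(I+T^\dagger S)u=Tu+Su\in R(T+S)$, giving the reverse inclusion. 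Equality with the closed subspace $R(T)$ then finishes the argument.

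There is essentially no obstacle here. It is perhaps worth remarking why one should not try to route the proof through Theorem \ref{sumclosedrange3} instead: the hypothesis $TT^\dagger S=S$ constrains the range of $S$ but carries no information about $N(S)$, so it does not yield $N(T)\subseteq N(T+S)$, and $\|T^\dagger S\|<1$ does not in general force $\|S\|<1/\|T^\dagger\|$; both of these would be required to apply Theorem \ref{sumclosedrange3}. So the clean and correct path is the direct appeal to Proposition \ref{sumclosedrange2}(3).
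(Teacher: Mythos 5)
Your proof is correct and follows the paper's own argument exactly: the paper likewise deduces $R(T+S)=R(T)$ from Proposition \ref{sumclosedrange2} and concludes that $R(T+S)$ is closed since $R(T)$ is. Your additional remarks (that $T+S$ is closed and densely defined, and why Theorem \ref{sumclosedrange3} is not the right route) are sound but not needed beyond what the paper records.
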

\begin{proof}
By Proposition \ref{sumclosedrange2}, $R(T+S)=R(T)$. Since $R(T)$ is
closed so is  $R(T+S)$.
\end{proof}

An analogue of the following Theorem for the case of rectangular
matrices was proved by Stewart \cite{stewart}. The same result for
bounded operators on Banach spaces with a slight different
assumption was obtained by Nashed \cite{nashed}.
\begin{thm}\label{geninvofsum}
Let $T\in \mathcal C(H_1,H_2)$ be densely defined operator with a
closed range. Suppose  $S\in \mathcal B(H_1,H_2)$ satisfies the
following conditions:

\begin{enumerate}\label{newdagger}
\item $||T^\dagger S||<1$
 \item $TT^\dagger S=S$  and
 \item $ST^\dagger T=S|_{D(T)}$.
 \end{enumerate}
 Then \begin{itemize}
\item[(a)] $R(T+S)$ is closed and
\item[(b)]$(T+S)^\dagger=(I+T^\dagger S)^{-1}T^\dagger =T^\dagger
(I+ST^\dagger)^{-1}$. Hence $$T^\dagger =(T+S)^\dagger
(I+ST^\dagger).$$
 \end{itemize}
\end{thm}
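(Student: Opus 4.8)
The plan is to read off (a) from results already proved, and to prove (b) by writing down the candidate
\[
G:=(I+T^{\dagger}S)^{-1}T^{\dagger}
\]
and checking that it has exactly the three properties that characterise the Moore--Penrose inverse in Definition~\ref{geninv}, this time for the densely defined closed operator $A:=T+S$.

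First I would collect the structural facts. Since conditions (1) and (2) hold, Proposition~\ref{sumclosedrange2}(3) gives $R(T+S)=R(T)$, which is closed; this is (a). Because $R(T)$ is closed, Proposition~\ref{equivalentclosedrange} makes $T^{\dagger}\in\mathcal B(H_2,H_1)$ with $D(T^{\dagger})=R(T)\oplus^{\bot}R(T)^{\bot}=H_2$, and the same applied to $T+S$ gives $(T+S)^{\dagger}\in\mathcal B(H_2,H_1)$ with full domain $H_2$. Condition (1) makes $(I+T^{\dagger}S)^{-1}\in\mathcal B(H_1)$, and a direct check shows $I-S(I+T^{\dagger}S)^{-1}T^{\dagger}$ is a two-sided inverse of $I+ST^{\dagger}$, so $(I+ST^{\dagger})^{-1}\in\mathcal B(H_2)$ as well. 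The trivial identity $(I+T^{\dagger}S)T^{\dagger}=T^{\dagger}+T^{\dagger}ST^{\dagger}=T^{\dagger}(I+ST^{\dagger})$ then yields the two expressions
\[
G=(I+T^{\dagger}S)^{-1}T^{\dagger}=T^{\dagger}(I+ST^{\dagger})^{-1}\in\mathcal B(H_2,H_1),
\]
and the right-hand form shows $R(G)\subseteq R(T^{\dagger})=C(T)\subseteq D(T)=D(T+S)$, so $G$ can legitimately be composed with $T+S$ on either side. Finally, by Proposition~\ref{sumclosedrange1}(1) condition (3) gives $N(T)\subseteq N(T+S)$, and conversely if $x\in N(T+S)$ then $0=(T+S)x=Tx+ST^{\dagger}Tx=(I+ST^{\dagger})Tx$ by condition (3), forcing $Tx=0$; hence $N(T+S)=N(T)$.

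Next I would verify the three properties of Definition~\ref{geninv} for $G$ and $A=T+S$. For property (3): since $(I+T^{\dagger}S)^{-1}$ is injective, $N(G)=N(T^{\dagger})=R(T)^{\bot}=R(T+S)^{\bot}$. For property (1): given $y\in H_2$ put $z:=(I+ST^{\dagger})^{-1}y$, so $Gy=T^{\dagger}z$, $ST^{\dagger}z=y-z$, and
\[
(T+S)Gy=TT^{\dagger}z+ST^{\dagger}z=P_{\overline{R(T)}}z+(y-z);
\]
since $y-z=ST^{\dagger}z\in R(S)\subseteq R(T)$ by condition (2), the component $P_{R(T)^{\bot}}(y-z)$ vanishes, so $(T+S)Gy=P_{R(T)}y=P_{\overline{R(T+S)}}y$. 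For property (2): given $x\in D(T)$,
\[
G(T+S)x=(I+T^{\dagger}S)^{-1}\big(T^{\dagger}Tx+T^{\dagger}Sx\big)=(I+T^{\dagger}S)^{-1}\big(P_{N(T)^{\bot}}x+T^{\dagger}Sx\big);
\]
condition (3) forces $N(T)\subseteq N(S)$, hence $S\,P_{N(T)}x=0$, so $T^{\dagger}Sx=T^{\dagger}S\,P_{N(T)^{\bot}}x$ and the bracket equals $(I+T^{\dagger}S)P_{N(T)^{\bot}}x$, giving $G(T+S)x=P_{N(T)^{\bot}}x=P_{N(T+S)^{\bot}}x$. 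Now $G$ is bounded (hence closed), densely defined, with $D(G)=H_2=R(T+S)\oplus^{\bot}R(T+S)^{\bot}$, so the uniqueness assertion of Definition~\ref{geninv} forces $G=(T+S)^{\dagger}$, and together with the two forms of $G$ this is exactly (b). Finally $(T+S)^{\dagger}(I+ST^{\dagger})=T^{\dagger}(I+ST^{\dagger})^{-1}(I+ST^{\dagger})=T^{\dagger}$, which is the last displayed identity.

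The computations themselves are short; the only place needing genuine care — and the only place that differs from the bounded or matrix setting — is domain bookkeeping. The factored form $G=T^{\dagger}(I+ST^{\dagger})^{-1}$ is what guarantees $Gy\in D(T+S)$, and the decomposition $D(T)=N(T)\oplus^{\bot}C(T)$ is what guarantees $P_{N(T)^{\bot}}x\in D(T)$ for $x\in D(T)$, so that $T^{\dagger}T$ may be applied to it. Beyond that, the heart of the proof is invoking each hypothesis at the right spot: condition (2), i.e.\ $R(S)\subseteq R(T)$, is precisely what makes property (1) work, condition (3), i.e.\ $N(T)\subseteq N(S)$, is precisely what makes property (2) work, and condition (1) is used only to render the relevant Neumann-type inverses bounded.
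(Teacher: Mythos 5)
Your proposal is correct and follows essentially the same route as the paper: both prove (a) via Proposition \ref{sumclosedrange2} and prove (b) by checking that $(I+T^{\dagger}S)^{-1}T^{\dagger}$ satisfies the defining properties of the Moore--Penrose inverse of $T+S$, with only cosmetic differences (you obtain the second form $T^{\dagger}(I+ST^{\dagger})^{-1}$ from the intertwining identity rather than the Neumann series, and you verify the axioms pointwise). If anything, your computation of $G(T+S)x$ is the more careful one: it correctly lands on $P_{N(T)^{\bot}}x$, whereas the paper's corresponding display ends with ``$=z$''.
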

\begin{proof}
First note that $T+S$ is a closed operator with $D(T+S)=D(T)$.
 The proof of $(a)$  follows by showing $R(T+S)=R(T)$, which is
proved in Proposition \ref{sumclosedrange2}. It remains to deduce
the formula for $T^\dagger$.

We want to prove  $T^\dagger=(I+T^\dagger S)(T+S)^\dagger$. It
suffices to prove $(T+S)^\dagger=(I+T^\dagger S)^{-1}T^\dagger$. Let
$U:=(I+T^\dagger S)^{-1}T^\dagger$. We show that $U$ satisfies all
the axioms of definition of the Moore-Penrose  inverse.

Note that the condition $ST^\dagger T=S|_{D(T)}$ together with
$||T^\dagger S||<1$, implies that $N(T)=N(T+S)$.

Let $z\in R(U)$. That is there exists a $y\in D(T^\dagger)$ such
that $z=(I+T^\dagger S)^{-1}T^\dagger y$. Hence $T^\dagger
y=z+T^\dagger Sz$. So $z=T^\dagger y-T^\dagger Sz \in
C(T)=C(T+S)\subseteq D(T+S)$

Now for $z\in D(T)$,
\begin{align*}
U(T+S)z&=(I+T^\dagger S)^{-1}T^\dagger (T+S)z\\
      &=(I+T^\dagger S)^{-1}T^\dagger (T+TT^\dagger S)z\\
      &=(I+T^\dagger S)^{-1}T^\dagger T(I+T^\dagger S )z\\
      &=(I+T^\dagger S)^{-1}P_{N(T)^{\bot}}(I+T^\dagger S )z \\
      &=(I+T^\dagger S)^{-1}(I+T^\dagger S )z\\
      &=z=P_{R(U)}z.
\end{align*}
In the fourth line of the above equations we have used the fact that
$T^\dagger T(I+T^\dagger S )=(I+T^\dagger S )T^\dagger T$ on $D(T)$,
which can be verified easily.

Now
\begin{align*}
(T+S)U=&(T+S)(I+T^\dagger S)^{-1}T^\dagger \\
      =&(T+TT^\dagger S)(I+T^\dagger S)^{-1}T^\dagger \quad (\text{since} \; S=TT^\dagger S)\\
      =&T(I+T^\dagger S)(I+T^\dagger S)^{-1}T^\dagger \\
      =&TT^\dagger \\
      =&P_{R(T)}\\
      =&P_{R(T+S)}. \qedhere
\end{align*}

Next, we show that $R(U)=R(S+T)^{\bot}$. Since $(I+T^\dagger S)^{-1}$ is invertible, $N(U)=N(T^{\dagger})=R(T)^{\bot}$. But by Proposition \ref{sumclosedrange2}, we have $R(T)=R(T+S)$.

The uniqueness of $(T+S)^\dagger$ follows  from Definition \ref{geninv}.

Since $R(S)\subseteq R(T)$, by Neumann series, we have
\begin{equation*}
 (I+ST^\dagger)^{-1}T^\dagger =\sum_{n=0}^\infty (-T^\dagger S)^{n}T^\dagger =\sum_{n=0}^\infty T^\dagger (-ST^\dagger)^n =T^\dagger (I+ST^\dagger).
\end{equation*}
\end{proof}

\begin{rmk}
 Proposition (\ref{sumclosedrange1})   and Proposition (\ref{sumclosedrange2}) tell us how to choose an operator $S$  satisfying the hypotheses of Theorem  (\ref{geninvofsum}).
 Let $\alpha $ be such that $0<\alpha <\displaystyle
 \frac{2}{||T^\dagger||}$. Let  $S_{\alpha}:=\alpha T(I+T^*T)^{-1}$ . Then by
 Proposition \ref{closurerelations}, \\$||T(I+T^*T)^{-1}||\leq \displaystyle \frac{1}{2}$. Thus  $||S_{\alpha}||\leq \displaystyle
 \frac{\alpha}{2}<\displaystyle \frac{1}{||T^\dagger||}$.

 Let $x\in D(T)$. Then
 \begin{align*}
 S_{\alpha} T^\dagger Tx&=\alpha T(I+T^*T)^{-1}T^\dagger Tx \\ &=\alpha (I+TT^*)^{-1}TT^\dagger Tx\\
                                                         &=\alpha (I+TT^*)^{-1}Tx \quad (\text{since}\;\; TT^\dagger T=T\quad \text{on}\;\; D(T))\\
                                                         &=\alpha \;T(I+T^*T)^{-1}x \quad \Big(\text{since}\;\; T(I+T^*T)^{-1}= (I+TT^*)^{-1}T \quad \text{on}\;\;D(T)\Big)\\
                                                         &=S_{\alpha}x.
 \end{align*}

 Now let $u\in H_1$. Then
\begin{align*}
 TT^\dagger S_{\alpha}u&=TT^\dagger \alpha T(I+T^*T)^{-1}u \\ &=\alpha T(I+T^*T)^{-1}u \quad (\text{since}\;\; TT^\dagger T=T\;\; \text{on}\;\; D(T))\\
                                                         &=\alpha T(I+T^*T)^{-1}u \\
                                                         &=S_{\alpha}u.
\end{align*}

Finally we have to verify that $||S_{\alpha}T^\dagger||<1$. As
$||S_{\alpha}||<\displaystyle \frac{\alpha}{2}$, we have
$$||S_{\alpha}T^\dagger||\leq ||S_{\alpha}||\;||T^\dagger||<1.$$
Hence the operators  $S_{\alpha} \quad (0<\alpha <\displaystyle
\frac{2}{||T^\dagger ||})$ satisfy Hypotheses of Theorem
\ref{geninvofsum}.
\end{rmk}
\begin{rmk}
From Theorem \ref{geninvofsum}, the following bounds for
$||(T+S)^\dagger-T^\dagger||$ can be deduced.\\
\begin{align*}
(T+S)^\dagger -T^\dagger&=(I+ST^\dagger)^{-1}T^\dagger-(I+ST^\dagger)(I+ST^\dagger)^{-1}T\dagger \\
                &=[I-(I+ST^\dagger-)](I+ST^\dagger)^{-1}T^\dagger\\
                &= (-ST^\dagger)(I+ST^\dagger)^{-1}T^\dagger .
\end{align*}
Hence
 $${||(T+S)^\dagger -T^\dagger||\leq \displaystyle \frac{||S||\,||T^\dagger||^2}{1-||T^\dagger S||}}.$$
\end{rmk}
\begin{rmk}\label{continuityofgamma}
Assume that $T$ and $S$ satisfy the conditions of  Theorem
\ref{newdagger}. We can deduce the following  formula.
\begin{align*}
|\gamma(S+T)-\gamma(T)|&=\left|\displaystyle \frac{1}{||(S+T)^\dagger||}-\displaystyle \frac{1}{||T^\dagger||}\right|\\
&=\left| \displaystyle \frac{||T^\dagger||-||(T+S)^\dagger||}{||T^\dagger||||(T+S)^\dagger||}\right| \\
&\leq \left| \displaystyle \frac{||T^\dagger||-||(T+S)^\dagger||}{||T^\dagger||||(T+S)^\dagger||}\right|\\
                &\leq \displaystyle \frac{||S||\,||T^\dagger||^2}{||T^\dagger||\,||(T+S)^\dagger||\,||[1-||T^\dagger S]||}\\
                &=\beta ||S||
\end{align*}
where $\beta=\displaystyle
\frac{||T^\dagger||}{||(T+S)^\dagger||[1-||T^\dagger S||]}>0.$
\end{rmk}
\begin{cor}
Let $T\in \mathcal C(H_1,H_2)$ be densely defined and have a closed
range. Assume that $S_n\in \mathcal B(H_1,H_2)$  satisfy all the
conditions of Theorem \ref{geninvofsum} and $S_n\rightarrow 0$. Then
\begin{enumerate}
\item $(T+S_n)^\dagger \rightarrow T^\dagger$ in the operator
norm of $\mathcal B(H_2,H_1)$.
\item $\gamma(T+S_n)\rightarrow \gamma(T)$
as $n\rightarrow \infty$.
\end{enumerate}
\end{cor}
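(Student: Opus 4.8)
The plan is to derive both conclusions from the quantitative estimates already in hand, so that no new structural argument is needed. For part (1), I would invoke the perturbation bound established in the Remark immediately following Theorem \ref{geninvofsum}, namely
\[
\|(T+S_n)^\dagger - T^\dagger\| \le \frac{\|S_n\|\,\|T^\dagger\|^{2}}{1-\|T^\dagger S_n\|},
\]
which applies because each $S_n$ satisfies the hypotheses of Theorem \ref{geninvofsum}. Since $\|T^\dagger S_n\| \le \|T^\dagger\|\,\|S_n\| \to 0$, for all large $n$ the denominator exceeds $1/2$, whence the right-hand side is at most $2\|T^\dagger\|^{2}\|S_n\| \to 0$. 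This gives $(T+S_n)^\dagger \to T^\dagger$ in the operator norm of $\mathcal B(H_2,H_1)$.

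For part (2), I would combine part (1) with the identity $\|A^\dagger\| = 1/\gamma(A)$ from Proposition \ref{equivalentclosedrange}(5), applied both to $T$ (whose range is closed by hypothesis) and to each $T+S_n$ (whose range is closed by Theorem \ref{geninvofsum}(a)). From part (1) and the continuity of the norm, $\|(T+S_n)^\dagger\| \to \|T^\dagger\|$. Assuming $T \ne 0$ we have $0 < \|T^\dagger\| < \infty$, so passing to reciprocals is legitimate and yields
\[
\gamma(T+S_n) = \frac{1}{\|(T+S_n)^\dagger\|} \longrightarrow \frac{1}{\|T^\dagger\|} = \gamma(T).
\]
Alternatively, one can read this off Remark \ref{continuityofgamma} directly: $|\gamma(T+S_n)-\gamma(T)| \le \beta_n\|S_n\|$ with $\beta_n = \|T^\dagger\|\big/\big(\|(T+S_n)^\dagger\|\,[1-\|T^\dagger S_n\|]\big)$; by part (1) the sequence $\|(T+S_n)^\dagger\|$ is eventually bounded away from $0$ and $\|T^\dagger S_n\| \to 0$, so $\sup_n \beta_n < \infty$ and $\beta_n\|S_n\| \to 0$.

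The only point requiring a moment's care is the passage to reciprocals in part (2): one must know that $\|(T+S_n)^\dagger\|$ does not degenerate to $0$, which is precisely what part (1) supplies once $\|T^\dagger\| > 0$; the trivial case $T = 0$ (where $\gamma$ is conventionally $+\infty$ and the statement is vacuous) can be set aside at the outset. I do not expect any genuine obstacle here — the corollary is a soft consequence of the norm estimate in the preceding remark together with $\|A^\dagger\| = 1/\gamma(A)$.
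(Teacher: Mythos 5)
Your proposal is correct and follows essentially the same route as the paper, which simply cites the error bound $\|(T+S)^\dagger-T^\dagger\|\leq \|S\|\,\|T^\dagger\|^2/(1-\|T^\dagger S\|)$ from the remark after Theorem \ref{geninvofsum} together with Remark \ref{continuityofgamma}. You merely supply the routine details (controlling the denominators and justifying the passage to reciprocals) that the paper leaves implicit.
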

\begin{proof}
The proofs follow from  Theorem \ref{newdagger} and  Remark
\ref{continuityofgamma}.
\end{proof}
The following results which are proved in \cite{dinghuang94}, can be
obtained as particular cases of the above results.
\begin{cor}\cite[Lemma 3.3]{dinghuang94}
Let $T\in \mathcal B(H_1,H_2)$ be injective with closed range, and
let $S\in \mathcal B(H_1,H_2)$ be such that
\begin{itemize}
 \item[(a)] $R(S)\subseteq R(T)$
\item [(b)]$||T^\dagger S||<1$.
\end{itemize}

Then
\begin{itemize}
\item[(1)] $T+S$ is injective
\item [(2)]$R(T+S)=R(T)$
\item [(3)]$(T+S)^\dagger =(I+T^\dagger S)^{-1}T^\dagger =T^\dagger (I+ST^\dagger)^{-1}$
\item [(4)]$||(T+S)^\dagger||\leq \displaystyle \frac{||T^\dagger||}{1-||T^\dagger S||}$
\item [(5)]$||(T+S)^\dagger -T^\dagger ||\leq \displaystyle \frac{||T^\dagger S||||T^\dagger||}{1-||T^\dagger S||}$.
\end{itemize}
\end{cor}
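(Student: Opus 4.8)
The plan is to derive this as a direct specialization of Theorem \ref{geninvofsum}, so the first task is to check that its three hypotheses hold in the present setting. Hypothesis $(1)$, $\|T^\dagger S\|<1$, is exactly assumption $(b)$. For hypothesis $(2)$, assumption $(a)$ says $R(S)\subseteq R(T)$, so Proposition \ref{sumclosedrange2}$(1)$ gives $TT^\dagger S=S$. For hypothesis $(3)$, since $T$ is injective we have $N(T)=\{0\}\subseteq N(S)$, and since $T$ is bounded and everywhere defined, $D(T)=H_1$; hence Proposition \ref{sumclosedrange1}$(1)$ yields $ST^\dagger T=S|_{D(T)}=S$. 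With all three hypotheses verified, Theorem \ref{geninvofsum} immediately gives that $R(T+S)$ is closed together with the formulas $(T+S)^\dagger=(I+T^\dagger S)^{-1}T^\dagger=T^\dagger(I+ST^\dagger)^{-1}$, which is assertion $(3)$.

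Next I would dispose of $(1)$ and $(2)$. Assertion $(2)$, $R(T+S)=R(T)$, is precisely the conclusion of Proposition \ref{sumclosedrange2}$(3)$, whose hypotheses $\|T^\dagger S\|<1$ and $TT^\dagger S=S$ have just been checked. For $(1)$, the quickest route is a direct argument: if $x\in N(T+S)$ then $Tx=-Sx$, so applying $T^\dagger$ and using the Moore--Penrose identity $T^\dagger Tx=P_{N(T)^\bot}x=x$ (valid since $T$ is injective) gives $x=-T^\dagger Sx$, i.e. $(I+T^\dagger S)x=0$; as $\|T^\dagger S\|<1$, the operator $I+T^\dagger S$ is injective, forcing $x=0$. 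Alternatively one can invoke the observation inside the proof of Theorem \ref{geninvofsum} that $N(T)=N(T+S)$ under these hypotheses.

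Finally, $(4)$ and $(5)$ are routine norm estimates from the formula in $(3)$. Since $\|T^\dagger S\|<1$, the Neumann series gives $\|(I+T^\dagger S)^{-1}\|\leq (1-\|T^\dagger S\|)^{-1}$; applying this to $(T+S)^\dagger=(I+T^\dagger S)^{-1}T^\dagger$ yields $(4)$. For $(5)$, write
\begin{equation*}
(T+S)^\dagger-T^\dagger=\big[(I+T^\dagger S)^{-1}-I\big]T^\dagger=-(I+T^\dagger S)^{-1}(T^\dagger S)T^\dagger,
\end{equation*}
and take norms to obtain $\|(T+S)^\dagger-T^\dagger\|\leq \dfrac{\|T^\dagger S\|\,\|T^\dagger\|}{1-\|T^\dagger S\|}$, as in the Remark following Theorem \ref{geninvofsum}.

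There is no genuine obstacle here, since the statement is a corollary; the only point needing a little care is assertion $(1)$, because Proposition \ref{sumclosedrange1}$(2)$ is phrased with the condition $\|ST^\dagger\|<1$ rather than the hypothesis $\|T^\dagger S\|<1$ actually available. It is therefore cleanest either to give the short direct argument above or to note that $\|T^\dagger S\|<1$ still renders $I+ST^\dagger$ invertible, via $(I+ST^\dagger)^{-1}=I-S(I+T^\dagger S)^{-1}T^\dagger$, which is what is implicitly used in the proof of Theorem \ref{geninvofsum}.
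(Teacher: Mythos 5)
Your proposal is correct and follows essentially the same route as the paper: verify the three hypotheses of Theorem \ref{geninvofsum} (with the paper deriving $ST^\dagger T=S|_{D(T)}$ directly from $T^\dagger T=I|_{D(T)}$ for injective $T$, while you route it through $N(T)\subseteq N(S)$ and Proposition \ref{sumclosedrange1} --- an immaterial difference), and then obtain $(4)$ and $(5)$ from the Neumann series, which is exactly what the paper's one-line remark leaves to the reader and you carry out explicitly.
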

\begin{proof}
 By Proposition (\ref{sumclosedrange1}), $(a)$ is equivalent to the condition $TT^\dagger S=S$. Since $T$ is injective, $T^\dagger T=I|_{D(T)}$.
 Hence $ST^\dagger T=SI|_{D(T)}=S|_{D(T)}$. Now by Theorem (\ref{geninvofsum}),  $(T+S)^\dagger =(I+T^\dagger S)^{-1}T^\dagger =T^\dagger (I+ST^\dagger)^{-1}$.
 The other relations  can be proved by using the Neumann series.
\end{proof}

\begin{cor}\cite[Lemma 3.4]{dinghuang94}
Let $T\in \mathcal B(H_1,H_2)$ be surjective and $S\in \mathcal
B(H_1,H_2)$ be such that
\begin{enumerate}
 \item[(a)] $N(T)\subseteq N(S)$
\item [(b)]$||ST^\dagger||<1$.
\end{enumerate}
Then
\begin{enumerate}
 \item $T+S$ is surjective
\item $N(T+S)=N(T)$
\item $(T+S)^\dagger =T^\dagger (I+ST^\dagger)^{-1}=(I+T^\dagger S)^{-1}T^\dagger$
\item $||(T+S)^\dagger||\leq \displaystyle \frac{||T^\dagger||}{1-||ST^\dagger ||}$
\item $||(T+S)^\dagger -T^\dagger ||\leq \displaystyle \frac{||ST^\dagger ||||T^\dagger||}{1-||ST^\dagger ||}$.
\end{enumerate}
\end{cor}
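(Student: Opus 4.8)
The plan is to derive everything from the single factorization $T+S=(I+ST^\dagger)T$, in the spirit of the proof of Theorem~\ref{geninvofsum}. Since $T\in\mathcal B(H_1,H_2)$ is surjective, $R(T)=H_2$ is closed, so by Proposition~\ref{equivalentclosedrange} the operator $T^\dagger$ is bounded with $D(T^\dagger)=H_2$; moreover $TT^\dagger=P_{\overline{R(T)}}=I$ and $N(T^\dagger)=R(T)^\bot=\{0\}$. Hypothesis~(a) together with Proposition~\ref{sumclosedrange1}(1) gives $ST^\dagger T=S|_{D(T)}=S$ (here $D(T)=H_1$ since $T$ is bounded), and combining this with hypothesis~(b) and Proposition~\ref{sumclosedrange1}(2) already yields $N(T+S)=N(T)$, which is item~(2). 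Finally, $\|ST^\dagger\|<1$ makes $I+ST^\dagger$ invertible in $\mathcal B(H_2)$, and from $ST^\dagger T=S$ we obtain $T+S=(I+ST^\dagger)T$ on $H_1$.

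Next I would put $U:=T^\dagger(I+ST^\dagger)^{-1}\in\mathcal B(H_2,H_1)$ and check the three axioms of Definition~\ref{geninv} for the pair $(T+S,U)$. First, $(T+S)U=(I+ST^\dagger)T\,T^\dagger(I+ST^\dagger)^{-1}=(I+ST^\dagger)(I+ST^\dagger)^{-1}=I$; in particular $T+S$ is surjective (item~(1)), $R(T+S)=H_2$, and therefore $(T+S)U=P_{\overline{R(T+S)}}$. Second, $U(T+S)=T^\dagger(I+ST^\dagger)^{-1}(I+ST^\dagger)T=T^\dagger T=P_{N(T)^\bot}=P_{N(T+S)^\bot}$, using item~(2). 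Third, since $T^\dagger$ and $(I+ST^\dagger)^{-1}$ are both injective, $N(U)=\{0\}=R(T+S)^\bot$. By the uniqueness clause of Definition~\ref{geninv}, $(T+S)^\dagger=U=T^\dagger(I+ST^\dagger)^{-1}$, which is the second formula in item~(3).

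For the first formula in item~(3) I would use a Neumann series. Although $\|T^\dagger S\|$ need not be less than $1$, the factorization $(T^\dagger S)^n=T^\dagger(ST^\dagger)^{n-1}S$ gives $\|(T^\dagger S)^n\|\leq\|T^\dagger\|\,\|S\|\,\|ST^\dagger\|^{n-1}$ for $n\geq1$, so the spectral radius of $T^\dagger S$ is at most $\|ST^\dagger\|<1$ and $(I+T^\dagger S)^{-1}=\sum_{n=0}^\infty(-T^\dagger S)^n$ converges in $\mathcal B(H_1)$. Regrouping the series, $(I+T^\dagger S)^{-1}T^\dagger=\sum_{n=0}^\infty(-T^\dagger S)^nT^\dagger=\sum_{n=0}^\infty T^\dagger(-ST^\dagger)^n=T^\dagger(I+ST^\dagger)^{-1}=(T+S)^\dagger$. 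Items~(4) and~(5) then follow from $\|(I+ST^\dagger)^{-1}\|\leq(1-\|ST^\dagger\|)^{-1}$ and the identity $(T+S)^\dagger-T^\dagger=T^\dagger\big[(I+ST^\dagger)^{-1}-I\big]=-T^\dagger(I+ST^\dagger)^{-1}ST^\dagger$.

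The one genuinely delicate point is the second expression in item~(3): the hypothesis bounds $\|ST^\dagger\|$, not $\|T^\dagger S\|$, so neither the Neumann series nor Theorem~\ref{geninvofsum} can be invoked verbatim, and the remedy is the spectral-radius observation above (equivalently, $I+T^\dagger S$ is invertible because $I+ST^\dagger$ is, the nonzero spectra of $T^\dagger S$ and $ST^\dagger$ coinciding, after which the push-through identity $T^\dagger(I+ST^\dagger)^{-1}=(I+T^\dagger S)^{-1}T^\dagger$ finishes the job). Alternatively the whole statement follows by duality: $T^*$ is bounded, injective, with closed range, $R(S^*)\subseteq R(T^*)$, and $\|(T^*)^\dagger S^*\|=\|(ST^\dagger)^*\|=\|ST^\dagger\|<1$, so the preceding corollary applies to $T^*,S^*$ and the present statement is recovered by taking adjoints, using $(T+S)^{*\dagger}=(T+S)^{\dagger *}$.
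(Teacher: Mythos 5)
Your proof is correct, but it takes a genuinely different route from the paper's. The paper's proof is a one-line reduction: it checks $TT^\dagger S=S$ (from surjectivity, $TT^\dagger=I$) and $ST^\dagger T=S$ (from Proposition \ref{sumclosedrange1}) and then simply invokes Theorem \ref{geninvofsum}. You instead verify the Moore--Penrose axioms of Definition \ref{geninv} directly for $U=T^\dagger(I+ST^\dagger)^{-1}$ via the factorization $T+S=(I+ST^\dagger)T$, and then recover the form $(I+T^\dagger S)^{-1}T^\dagger$ by the spectral-radius/push-through identity. Your approach is longer but it buys something real: Theorem \ref{geninvofsum} as stated hypothesizes $\|T^\dagger S\|<1$, whereas the present corollary only assumes $\|ST^\dagger\|<1$, and these are not comparable in general; the paper's proof passes over this mismatch silently, while you identify it as the delicate point and close it (either by the estimate $\|(T^\dagger S)^n\|\leq\|T^\dagger\|\,\|S\|\,\|ST^\dagger\|^{n-1}$ giving $r(T^\dagger S)\leq\|ST^\dagger\|<1$, or by the duality argument reducing to the preceding corollary applied to $T^*,S^*$). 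The direct verification also yields items (1), (2), (4), (5) along the way with explicit constants, whereas the paper defers these to ``Neumann series and Theorem \ref{geninvofsum}.'' In short: same skeleton (the factorization $(I+ST^\dagger)T$ and Proposition \ref{sumclosedrange1}), but your execution is self-contained and repairs a hypothesis mismatch that the paper's citation of Theorem \ref{geninvofsum} does not literally cover.
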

\begin{proof}
 By Proposition (\ref{sumclosedrange1}), $N(T)\subseteq N(S) \Leftrightarrow ST^\dagger T=S|_{D(T)}$. Since $T$ is surjective,  $TT^\dagger =I$, that is $STT^\dagger =S$.
 Now applying Theorem (\ref{geninvofsum}), we get the expression for $(S+T)^\dagger$. The bounds for $||(S+T)^\dagger ||$ and $||(S+T)^\dagger -T^\dagger ||$ follows from Neumann series and Theorem \ref{geninvofsum}.
\end{proof}

%\begin{cor}\cite[Lemma 4.2]{dinghuang94}
%Let $S,T\in \mathcal B(H_1,H_2)$. The following statements holds.
%\begin{enumerate}
% \item $N(T+S)\subseteq N(T) \Leftrightarrow \gamma(T+S)\leq \gamma(T)+||S||$
%\item $N(T)\subseteq N(T+S)\Leftrightarrow \gamma(T)\leq \gamma(T+S)+||S||$
%\item $N(T+S)=N(T)\Leftrightarrow |\gamma(T+S)-\gamma(T)|\leq ||S||$.
%\end{enumerate}
%\end{cor}
%\begin{proof}
% All these proofs follows by the definition of the reduced minimu modulus of the operator.
%\end{proof}

\begin{cor}\cite[Lemma 4.3]{dinghuang94}
Let $T\in \mathcal B(H_1,H_2)$ have a closed range and $S\in
\mathcal B(H_1,H_2)$ be such that
\begin{enumerate}
\item $N(T)\subseteq N(S)$
\item $||S||||T^\dagger||<1$.
\end{enumerate}
Then
\begin{enumerate}
 \item[(i)] $N(T+S)=N(T)$
\item [(ii)]$||(T+S)^\dagger ||\leq  \displaystyle \frac{||T^\dagger||}{1-||S||||T^\dagger||}$.
\end{enumerate}
\end{cor}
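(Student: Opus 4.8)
The plan is to treat (i) as an immediate consequence of Proposition \ref{sumclosedrange1}, then bootstrap from (i) to closedness of $R(T+S)$ and a lower estimate for $\gamma(T+S)$, and finally convert that estimate into the asserted norm bound via Proposition \ref{equivalentclosedrange}(5). Concretely, for (i) I would first observe that the hypothesis $N(T)\subseteq N(S)$ is, by Proposition \ref{sumclosedrange1}(1), equivalent to $ST^\dagger T=S|_{D(T)}$. Since $\|ST^\dagger\|\leq \|S\|\,\|T^\dagger\|<1$, part (2) of the same proposition then gives $N(T+S)=N(T)$ directly.

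For (ii) the first step is to record that, because $S$ is bounded, $T+S$ is a closed operator with $D(T+S)=D(T)$; combining this with (i) yields $N(T+S)^\bot=N(T)^\bot$ and hence $C(T+S)=D(T+S)\cap N(T+S)^\bot=D(T)\cap N(T)^\bot=C(T)$. Next, since by Proposition \ref{equivalentclosedrange}(5) we have $\gamma(T)=1/\|T^\dagger\|$, the hypothesis $\|S\|\,\|T^\dagger\|<1$ says precisely that $\|S\|<\gamma(T)$; together with $N(T)\subseteq N(T+S)$ from (i), Theorem \ref{sumclosedrange3} guarantees that $R(T+S)$ is closed, so that $(T+S)^\dagger$ is bounded and $\|(T+S)^\dagger\|=1/\gamma(T+S)$ by Proposition \ref{equivalentclosedrange}(5). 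Finally, for every $x\in S_{C(T+S)}=S_{C(T)}$ the triangle inequality gives $\|(T+S)x\|\geq \|Tx\|-\|Sx\|\geq \gamma(T)-\|S\|$, whence $\gamma(T+S)\geq \gamma(T)-\|S\|>0$, and therefore
\begin{equation*}
\|(T+S)^\dagger\|=\frac{1}{\gamma(T+S)}\leq \frac{1}{\gamma(T)-\|S\|}=\frac{\|T^\dagger\|}{1-\|S\|\,\|T^\dagger\|}.
\end{equation*}

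There is no genuine obstacle in this argument; it is a direct assembly of the propositions already proved. The one point that deserves care — and the reason (i) must be dealt with before (ii) — is that the reduced minimum modulus estimate is only available on the carrier, so the identification $C(T+S)=C(T)$ coming from $N(T+S)=N(T)$ is exactly what licenses applying the $\gamma(T)$-bound to vectors in $S_{C(T+S)}$. Everything else is the same triangle-inequality computation used in the proof of Theorem \ref{sumclosedrange3}, now pushed one step further to extract the explicit bound on $\|(T+S)^\dagger\|$.
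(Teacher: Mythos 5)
Your proposal is correct and follows essentially the same route as the paper: part (i) via Proposition \ref{sumclosedrange1} (using $\|ST^\dagger\|\leq\|S\|\,\|T^\dagger\|<1$), and part (ii) by the triangle-inequality estimate on the carrier from Theorem \ref{sumclosedrange3}, giving $\gamma(T+S)\geq\gamma(T)-\|S\|$ and then the norm bound through Proposition \ref{equivalentclosedrange}(5). Your explicit remark that $C(T+S)=C(T)$ is what licenses applying the $\gamma(T)$-estimate is a slightly more careful articulation of a point the paper leaves implicit, but the argument is the same.
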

\begin{proof}
 The proof of $(i)$, follows by Proposition (\ref{sumclosedrange1}) and Theorem
 (\ref{sumclosedrange3}).

\underline{Proof of $(ii)$}:

By Theorem \ref{sumclosedrange3}, we have  $$||Tx+Sx||\geq
(\gamma(T)-||S||)||x||\quad \text{for all}\quad x\in C(T+S).$$ Hence
$\gamma(T+S)\geq \gamma(T)-||S||$. That is
$$
 \displaystyle \frac{1}{||(T+S)^\dagger||} \geq \displaystyle \frac{1}{||T^\dagger||}-||S||
                                           \geq \displaystyle \frac{1-||S||||T^\dagger||}{||T^\dagger||}.$$
Therefore $||(T+S)^\dagger||\leq \displaystyle
\frac{||T^\dagger||}{1-||S||||T^\dagger||}.$
\end{proof}

\section{Perturbation of Moore-Penrose inverses-II}
In this section we prove perturbation results for Moore-Penrose inverses under different assumptions. First we prove a representation result for the Moore-Penrose inverse.
\begin{thm}\label{repn}
Let $T\in \mathcal C(H_1,H_2)$ be densely defined and have closed range. Then
\begin{equation*}
(T^*T)^{\dagger}T^*\subseteq T^*(TT^*)^{\dagger}=T^{\dagger}.
\end{equation*}
\end{thm}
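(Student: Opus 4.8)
The plan is to reduce the statement to two ingredients already available: the factorizations $(TT^*)^\dagger = T^{*\dagger}T^\dagger$ and $(T^*T)^\dagger = T^\dagger T^{*\dagger}$ from the structure theorem for $T^\dagger$ listed above, and the two ``projection'' identities $T^*T^{*\dagger}=P_{N(T)^\perp}$ and $T^{*\dagger}T^*=P_{R(T)}$ (on $D(T^*)$), which come from properties (1) and (2) of Definition~\ref{geninv} applied to the operator $T^*$. First I would record what the closed-range hypothesis buys: by Proposition~\ref{equivalentclosedrange}, $R(T^*)$, $R(T^*T)$ and $R(TT^*)$ are all closed, so $T^\dagger,\ T^{*\dagger},\ (T^*T)^\dagger,\ (TT^*)^\dagger$ are all bounded and everywhere defined (because $D(S^\dagger)=R(S)\oplus^\perp R(S)^\perp$ is the whole space once $R(S)$ is closed). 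I would also note $R(T^\dagger)=C(T)\subseteq N(T)^\perp$, $N(T^\dagger)=R(T)^\perp$, and $R(T^{*\dagger})=C(T^*)\subseteq D(T^*)$; the last inclusion is exactly what makes $T^*(TT^*)^\dagger$ a legitimate composition.

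For the equality $T^*(TT^*)^\dagger=T^\dagger$, I would substitute $(TT^*)^\dagger=T^{*\dagger}T^\dagger$ and reassociate (all compositions are well defined: $R(T^\dagger)\subseteq H_1=D(T^{*\dagger})$ and $R(T^{*\dagger})\subseteq D(T^*)$), to get $T^*(TT^*)^\dagger=(T^*T^{*\dagger})T^\dagger$. Applying property (1) of the Moore--Penrose inverse to $T^*$ and using that $R(T^*)=N(T)^\perp$ is closed gives $T^*T^{*\dagger}=P_{N(T)^\perp}$ on all of $H_1$; and since $R(T^\dagger)=C(T)\subseteq N(T)^\perp$ we get $P_{N(T)^\perp}T^\dagger=T^\dagger$. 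This yields $T^*(TT^*)^\dagger=T^\dagger$ as an equality of everywhere-defined bounded operators.

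For the inclusion $(T^*T)^\dagger T^*\subseteq T^\dagger$, I would substitute $(T^*T)^\dagger=T^\dagger T^{*\dagger}$ and reassociate to get $(T^*T)^\dagger T^*=T^\dagger(T^{*\dagger}T^*)$, an operator with domain $D(T^*)$. By property (2) of the Moore--Penrose inverse applied to $T^*$ and $N(T^*)^\perp=\overline{R(T)}=R(T)$, one has $T^{*\dagger}T^*=P_{R(T)}$ on $D(T^*)$; and $T^\dagger P_{R(T)}=T^\dagger$ on $H_2=R(T)\oplus^\perp R(T)^\perp$ since $R(T)^\perp=N(T^\dagger)$. Hence $(T^*T)^\dagger T^* x=T^\dagger x$ for every $x\in D(T^*)$, i.e. $(T^*T)^\dagger T^*=T^\dagger|_{D(T^*)}\subseteq T^\dagger$, with equality exactly when $D(T^*)=H_2$, that is, when $T$ (equivalently $T^*$) is bounded. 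The only point requiring care is the domain bookkeeping --- verifying that each triple composition $T^*T^{*\dagger}T^\dagger$ and $T^\dagger T^{*\dagger}T^*$ is defined on the claimed domain and that the reassociations are valid; once the everywhere-definedness of $T^\dagger,T^{*\dagger}$ and the range inclusions $R(T^\dagger)\subseteq N(T)^\perp$, $R(T^{*\dagger})\subseteq D(T^*)$ are in place, no genuine analytic obstacle remains.
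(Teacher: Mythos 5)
Your proof is correct, but it takes a genuinely different route from the paper's. You derive both halves from the factorizations $(T^*T)^\dagger = T^\dagger T^{*\dagger}$ and $(TT^*)^\dagger = T^{*\dagger}T^\dagger$ (items (7)--(8) of the structure theorem for $T^\dagger$) together with the projection identities $T^*T^{*\dagger} = P_{N(T)^\perp}$ and $T^{*\dagger}T^* = P_{R(T)}$ on $D(T^*)$, obtaining $T^*(TT^*)^\dagger = P_{N(T)^\perp}T^\dagger = T^\dagger$ and $(T^*T)^\dagger T^* = T^\dagger P_{R(T)}|_{D(T^*)} = T^\dagger|_{D(T^*)}$. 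The paper instead proves the inclusion $(T^*T)^\dagger T^* \subseteq T^*(TT^*)^\dagger$ element-wise: after checking $D\big((T^*T)^\dagger T^*\big) = D(T^*)$ while $D\big(T^*(TT^*)^\dagger\big) = H_2$, it decomposes $z\in D(T^*)$ as $z = u + Tv$ with $u\in N(T^*)$ and verifies that both operators send $z$ to $v$; for the equality $T^*(TT^*)^\dagger = T^\dagger$ it simply cites an external corollary. Your argument buys two things: it is self-contained (the equality is proved rather than imported), and it makes the precise relationship transparent --- $(T^*T)^\dagger T^*$ is exactly the restriction of $T^\dagger$ to $D(T^*)$, which explains why only an inclusion can hold when $T$ is unbounded. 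The points requiring care, which you correctly flag, are the domain bookkeeping (all of $T^\dagger$, $T^{*\dagger}$, $(T^*T)^\dagger$, $(TT^*)^\dagger$ are bounded and everywhere defined by closedness of the ranges, and $R(T^{*\dagger}) = C(T^*)\subseteq D(T^*)$ legitimizes the composition with $T^*$) and the fact that composition of partial operators is associative, so the reassociations are valid; with these in place the argument is complete.
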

\begin{proof}
First we prove that $(T^*T)^*\subseteq T^*(TT^*)^{\dagger}$. Since $R(TT^*)$ is closed, $(TT^*)^{\dagger}\in \mathcal B(H_1)$. Hence $D((T^*T)^{\dagger}T^*)=D(T^*)$. Next, since $R(TT^*)^{\dagger}=C(TT^*)\subseteq D(TT^*)\subseteq D(T^*)$, by definition of product of operators, we have that $D(T^*(TT^*)^{\dagger})=H_2$ and $T^*T(TT^*)^{\dagger}$ being a composition of a closed operator with a bounded operator, is closed. Hence by the closed graph theorem \cite{riesznagy}, $T^*(TT^*)^{\dagger}$ is bounded. This shows that $D\big((T^*T)^{\dagger}T^*\big)\subseteq D\big(T^*(TT^*)^{\dagger}\big)$.
Now, let $z\in D\big((T^*T)^{\dagger}T^*\big)$. Then $z=u+Tv$, where $u\in N(T^*)=N(T^{\dagger})=N\big((TT^*)^{\dagger}\big)$ and $v\in C(TT^*)$. Hence
\begin{equation*}
(T^*T)^{\dagger}T^*z=(T^*T)^{\dagger}T^*(u+Tv)=(T^*T)^{\dagger}T^*Tv=P_{N(TT^*)^{\bot}v}=v.
\end{equation*}
Similarly, for $z\in D\big((T^*T)^{\dagger}T^*\big)$, we have
\begin{align*}
T^*(TT^*)^{\dagger}z=T^*T{T^*}^{\dagger}z=T^*{T^*}\dagger T^{\dagger}(u+Tv)&=T^*{T^*}^{\dagger}T^{\dagger}Tv\\
                                                                            &=P_{R(T^*)^{\bot}P_{N(T)^{\bot}}}v\\
                                                                            &=v.
\end{align*}
The result, $T^*(TT^*)^{\dagger}=T^{\dagger}$ is proved in \cite[Corollary 3.3]{cwginclusions}.
\end{proof}
\begin{rmk}
If $T$ is a bounded operator with closed range, then $(T^*T)^{\dagger}T^*=T^*(TT^*)^{\dagger}=T^{\dagger}$.
\end{rmk}
Now we obtain the riverse order law for Moore-Penrose inverse assuming particular conditions. The same result for bounded operators can be found in \cite{rbholmes77}.
\begin{thm}\label{riverseorderlaw}
Let $F\in \mathcal B(H_2,H_3)$ with $R(F^*)=H_2$ and $G\in \mathcal C(H_1,H_2)$ be densely defined such that $R(G)=H_2$ and $A=FG$ is closed. Then
\begin{equation*}
A^{\dagger}=G^{\dagger}F^{\dagger}=G^*(GG^*)^{-1}(F^*F)^{-1}F^*.
\end{equation*}
\end{thm}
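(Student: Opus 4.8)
The plan is to check directly that $U:=G^{\dagger}F^{\dagger}$ satisfies the three axioms of $A^{\dagger}$ listed in Definition~\ref{geninv}, invoke the uniqueness of the Moore--Penrose inverse, and only afterwards rewrite $U$ using the representation of Theorem~\ref{repn}.

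First I would unpack the hypotheses. The condition $R(F^{*})=H_{2}$ says that $F$ is injective with closed range, so $F^{*}F$ is a bounded, self-adjoint, bijective operator on $H_{2}$; hence $(F^{*}F)^{-1}\in\mathcal B(H_{2})$, and by the remark following Theorem~\ref{repn}, $F^{\dagger}=(F^{*}F)^{-1}F^{*}\in\mathcal B(H_{3},H_{2})$ with $F^{\dagger}F=I_{H_{2}}$ and $FF^{\dagger}=P_{R(F)}$. Dually, $R(G)=H_{2}$ gives $N(G^{*})=R(G)^{\bot}=\{0\}$; since $GG^{*}$ is positive (hence self-adjoint and closed) and, by Proposition~\ref{equivalentclosedrange}, has closed range, it is bijective, so $(GG^{*})^{-1}\in\mathcal B(H_{2})$, and by Theorem~\ref{repn}, $G^{\dagger}=G^{*}(GG^{*})^{-1}$, which is bounded because $R(G)$ is closed, with $GG^{\dagger}=I_{H_{2}}$ and $G^{\dagger}G=P_{N(G)^{\bot}}$ on $D(G)$. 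I would also record that $D(A)=D(G)$ is dense and $A$ is closed by hypothesis, so $A^{\dagger}$ exists and is unique; that $N(A)=N(G)$ because $F$ is injective; and that $R(A)=F(R(G))=R(F)$, which is closed.

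Then $U=G^{\dagger}F^{\dagger}\in\mathcal B(H_{3},H_{1})$, and for every $y\in H_{3}$ one has $F^{\dagger}y\in H_{2}=D(G^{\dagger})$ and $Uy=G^{\dagger}F^{\dagger}y\in R(G^{\dagger})=C(G)\subseteq D(G)=D(A)$, so all the compositions below make sense. For $y\in H_{3}$, $AUy=F\bigl(GG^{\dagger}(F^{\dagger}y)\bigr)=FF^{\dagger}y=P_{R(F)}y=P_{R(A)}y$. For $x\in D(A)=D(G)$, $UAx=G^{\dagger}\bigl(F^{\dagger}F(Gx)\bigr)=G^{\dagger}Gx=P_{N(G)^{\bot}}x=P_{N(A)^{\bot}}x$. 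Finally $N(G^{\dagger})=R(G)^{\bot}=\{0\}$, so $N(U)=N(F^{\dagger})=R(F)^{\bot}=R(A)^{\bot}$. By the uniqueness clause of Definition~\ref{geninv} this forces $A^{\dagger}=U=G^{\dagger}F^{\dagger}$, and substituting $G^{\dagger}=G^{*}(GG^{*})^{-1}$ and $F^{\dagger}=(F^{*}F)^{-1}F^{*}$ yields the second equality.

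The individual computations are short; the point that needs genuine care is the domain bookkeeping when the unbounded operator $G$ is composed with the bounded operators $F$, $F^{\dagger}$, $G^{\dagger}$ --- in particular verifying that $Uy$ really lies in $D(A)$ and that the cancellations $F^{\dagger}F=I_{H_{2}}$ and $GG^{\dagger}=I_{H_{2}}$ are applied on their correct domains --- together with upgrading ``closed range'' to ``bijective'' for $GG^{*}$ and $F^{*}F$, which is exactly what makes $(GG^{*})^{\dagger}=(GG^{*})^{-1}$ and $(F^{*}F)^{\dagger}=(F^{*}F)^{-1}$. I do not expect any deeper obstruction.
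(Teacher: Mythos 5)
Your proof is correct, and it takes a genuinely different route from the paper's. The paper never tests $G^{\dagger}F^{\dagger}$ against the Moore--Penrose axioms; it starts from the factorization $A^{\dagger}=A^{\dagger}AA^{\dagger}=(A^{\dagger}F)(GA^{\dagger})$ and identifies the two factors separately, deriving $A^{\dagger}F=G^{*}(GG^{*})^{-1}$ and $GA^{\dagger}=(F^{*}F)^{-1}F^{*}$ by adjoint manipulations based on $A^{*}=G^{*}F^{*}$ (Proposition \ref{productadjoint}) and the cancellations $F^{\dagger}F=I$, $GG^{\dagger}=I$, and only at the end invokes Theorem \ref{repn} to recognize these factors as $G^{\dagger}$ and $F^{\dagger}$. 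You go in the opposite direction: you take $U=G^{\dagger}F^{\dagger}$ as a candidate, verify the three defining properties of Definition \ref{geninv} (namely $AU=P_{R(A)}$ on $H_{3}$, $UA=P_{N(A)^{\bot}}$ on $D(A)$, and $N(U)=R(A)^{\bot}$), conclude by uniqueness, and then substitute the closed formulas for $F^{\dagger}$ and $G^{\dagger}$. Your route is arguably the safer one in the unbounded setting: each composition involves at most one unbounded operator, and the domain inclusions you flag ($R(G^{\dagger})=C(G)\subseteq D(G)=D(A)$, $D(G^{\dagger})=H_{2}$, $F^{\dagger}F=I$ applied to $Gx$) make the bookkeeping completely explicit, whereas the paper's chain of identities such as $(A^{\dagger}A)^{*}=A^{\dagger}A$ and the pre- and post-multiplications by $F^{\dagger}$ and $G^{\dagger}$ carry implicit domain issues for the unbounded $A^{*}=G^{*}F^{*}$. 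What the paper's approach buys in exchange is that it \emph{derives} the formula rather than verifying a guessed candidate, and it isolates the identities $A^{\dagger}F=G^{\dagger}$ and $GA^{\dagger}=F^{\dagger}$, which have some independent interest. Both arguments rest on the same preliminary observations ($F$ injective with closed range, $G$ surjective, $R(A)=R(F)$ closed, $(F^{*}F)^{-1}$ and $(GG^{*})^{-1}$ bounded), and your justification of these from Proposition \ref{equivalentclosedrange} and Theorem \ref{repn} is sound.
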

\begin{proof}
Here $D(A)=D(G)$, which is dense in $H_1$. Hence $A^*$ exists and equals to $G^*F^*$ by Proposition \ref{productadjoint}. Using the fact that $G$ is onto we can prove that $R(A)=R(F)$ and hence by Proposion \ref{equivalentclosedrange}, $R(A)$ is closed, which is equivalent to saying that $A^{\dagger}$ is bounded. Since $F$ is one-to-one, $F^{\dagger}F=I$ and $(F^*F)^{-1}\in \mathcal B(H_1)$. Also, since $G$ is onto, $GG^{\dagger}=I$ and $(GG^*)^{-1}\in \mathcal B(H_2)$. Thus we have
\begin{equation*}
A^{\dagger}=A^{\dagger}AA^{\dagger}=A^{\dagger}FGA^{\dagger}=(A^{\dagger}F)(GA^{\dagger}).
\end{equation*}
In view of Theorem \ref{repn}, it is enough to show that
\begin{enumerate}
\item \label{1stpart} $A^{\dagger}F=G^*(GG^*)^{-1}$ and
\item \label{2ndpart} $GA^{\dagger}=(F^*F)^{-1}F^*$.
\end{enumerate}
First we prove (\ref{1stpart}. For this, consider
\begin{equation}\label{repneqn1}
G^*F^*{A^{\dagger}}^*=(FG)^{*}{A^{\dagger}}^*=(A^{\dagger}A)^{*}=A^{\dagger}A.
\end{equation}
Hence by Equation \ref{repneqn1}, we have that
\begin{equation}\label{repneq2}
FGG^*F^*{A^{\dagger}}^*=FGA^{\dagger}A=AA^{\dagger}A=A=FG.
\end{equation}
pre multiplying Equation \ref{repneq2} by $F^{\dagger}$, we get
\begin{equation}\label{repneq3}
F^{\dagger}FGG^*F^*{A^{\dagger}}^*=F^{\dagger}FG.
\end{equation}
Since, $FF^{\dagger}=I$, The Equation (\ref{repneq3}), becomes $GG^*F^*{A^\dagger}^*=G$. That is $(A^{\dagger}F)^*=(GG^*)^{-1}G$. Taking adjoint both sides, we can conclude
that $A^{\dagger}F=G^*(GG^*)^{-1}$.

To prove, $GA^{\dagger}=(F^*F)^{-1}F^*$, note that
\begin{equation*}
AA^{\dagger}=(AA^{\dagger})^*={A^{\dagger}}^*A^*={A^{\dagger}}^*G^*F^*.
\end{equation*}
Also \begin{equation}\label{repneq4}
FG=A=AA^{\dagger}A={A^\dagger}^*G^*F^*FG.
\end{equation}
Post multiplying Equation (\ref{repneq4}) by $G^{\dagger}$ and noting that $GG^{\dagger}=I$, we get that $F={A^\dagger}^*G^*F^*F=(GA^{\dagger})^*F^*F$. Therefore, $(GA^{\dagger})^*=F(FF^*)^{-1}$, taking adjoint both sides, we get that $GA^{\dagger}=(F^*F)^{-1}F^*$.
\end{proof}
\begin{lemma}\label{relativeneumanninequality}\cite[lemma 2.2]{dingj}
Let $T\in \mathcal B(H)$ be such that
\begin{equation*}
\|Tx\|\leq \lambda_1\|x\|+\lambda_2\|(I+T)x\| \; \text{for all}\; x\in H,
\end{equation*}
where $\lambda_j<1,\; j=1,2$. Then $\lambda_j\in (-1,1)$ and $I+T$ is bijective.

Moreover,
\begin{align*}
\dfrac{1-\lambda_1}{1+\lambda_2}\|x\|&\leq \|(I+T)x\|\leq \dfrac{1+\lambda_1}{1-\lambda_2}\|x\|\; \text{for all}\; x\in H,\\
\dfrac{1-\lambda_2}{1+\lambda_1}\|y\|&\leq \|(I+T)^{-1}y\|\leq \dfrac{1+\lambda_2}{1-\lambda_1}\|y\|\; \text{for all}\; y\in H.
\end{align*}
\end{lemma}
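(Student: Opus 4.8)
The plan is to extract everything from the hypothesis by pairing it, for each $x\in H$, with the forward and reverse triangle inequalities for the triple $x$, $(I+T)x$, $Tx=(I+T)x-x$. From $\|x\|\le\|(I+T)x\|+\|Tx\|$ and the hypothesis I would get $(1-\lambda_1)\|x\|\le(1+\lambda_2)\|(I+T)x\|$ for all $x$, and from $\|(I+T)x\|\le\|x\|+\|Tx\|$ and the hypothesis I would get $(1-\lambda_2)\|(I+T)x\|\le(1+\lambda_1)\|x\|$ for all $x$. Since $\lambda_1<1$ and $\lambda_2<1$ we have $1-\lambda_1>0$ and $1-\lambda_2>0$. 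The case $T=-I$ is excluded by the hypothesis (it would force $\|x\|\le\lambda_1\|x\|$), so some $x_0$ has $(I+T)x_0\ne0$; feeding $x_0$ into the first inequality forces $1+\lambda_2>0$, and were $1+\lambda_1\le0$ the second inequality would force $I+T\equiv0$, so $1+\lambda_1>0$. Hence $\lambda_1,\lambda_2\in(-1,1)$, and the two inequalities read off immediately as $\frac{1-\lambda_1}{1+\lambda_2}\|x\|\le\|(I+T)x\|\le\frac{1+\lambda_1}{1-\lambda_2}\|x\|$. In particular $I+T$ is bounded below, hence injective with closed range; and once $I+T$ is known to be onto, substituting $y=(I+T)x$ turns these bounds into $\frac{1-\lambda_2}{1+\lambda_1}\|y\|\le\|(I+T)^{-1}y\|\le\frac{1+\lambda_2}{1-\lambda_1}\|y\|$. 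So the lemma reduces to surjectivity of $I+T$.

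Surjectivity is the crux, and the naive attempt fails: the hypothesis only gives $\|Tx\|\le\frac{\lambda_1+\lambda_2}{1-\lambda_2}\|x\|$, whose constant can exceed $1$ (take $\lambda_1,\lambda_2$ both near $1$), so one cannot invert $I+T$ by a Neumann series in $T$ directly. Instead I would run a continuity argument along the segment $A_t:=I+tT$, $t\in[0,1]$, the key being a lower bound $\|A_tx\|\ge c\|x\|$ with $c>0$ independent of $t$. After disposing of the trivial case $\lambda_2\le0$ (there the hypothesis collapses to $\|Tx\|\le\lambda_1\|x\|$, so either $T=0$ or $\|T\|<1$, and $I+T$ is invertible outright), write $(I+T)x=A_tx+(1-t)Tx$, insert $\|(I+T)x\|\le\|A_tx\|+(1-t)\|Tx\|$ into the hypothesis, and solve for $\|Tx\|$; multiplying through by $t$ shows $tT$ satisfies an inequality of exactly the same shape, $\|(tT)x\|\le\mu_1(t)\|x\|+\mu_2(t)\|A_tx\|$ with $\mu_1(t)=\frac{t\lambda_1}{1-\lambda_2(1-t)}$ and $\mu_2(t)=\frac{t\lambda_2}{1-\lambda_2(1-t)}\ge0$. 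Applying the computation of the first paragraph to $tT$ gives $\|A_tx\|\ge\frac{1-\mu_1(t)}{1+\mu_2(t)}\|x\|$, and $\mu_1(t)<1$ for every $t\in[0,1]$ because $t\mapsto t\lambda_1+(1-t)\lambda_2$ is affine with both endpoint values ($\lambda_2$ at $t=0$, $\lambda_1$ at $t=1$) strictly below $1$; compactness of $[0,1]$ then produces a uniform $c>0$.

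With this uniform lower bound, the set $\{t\in[0,1]:A_t\ \text{invertible}\}$ contains $0$ and is both open and closed in $[0,1]$: both follow from the factorization $A_t=A_{t_0}\bigl(I+(t-t_0)A_{t_0}^{-1}T\bigr)$ together with $\|A_{t_0}^{-1}\|\le 1/c$, which makes the second factor invertible whenever $|t-t_0|<c/(\|T\|+1)$. Connectedness of $[0,1]$ then forces the set to be all of $[0,1]$, so $A_1=I+T$ is invertible and the lemma follows. I expect the surjectivity step — concretely, producing the $t$-uniform lower bound so that the homotopy can be closed up — to be the only genuine obstacle, the rest being routine manipulation of triangle inequalities.
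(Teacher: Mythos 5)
The paper gives no proof of this lemma at all: it is imported verbatim from Ding \cite{dingj} and used as a black box, so there is no in-paper argument to match yours against. Judged on its own, your proof is complete and correct, and it is worth recording how it divides the labour. The triangle-inequality bookkeeping in your first paragraph — deriving $(1-\lambda_1)\|x\|\le(1+\lambda_2)\|(I+T)x\|$ and $(1-\lambda_2)\|(I+T)x\|\le(1+\lambda_1)\|x\|$, reading off $\lambda_1,\lambda_2\in(-1,1)$ from the existence of a nonzero vector, and converting the bounds on $I+T$ into bounds on $(I+T)^{-1}$ once bijectivity is known — is exactly the manipulation the paper itself performs in the proof of part (\ref{perturbclosedrange}) of Theorem \ref{mainperturb}, so that part is routine. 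You are also right that surjectivity is the genuine content: boundedness below gives only injectivity and closed range, and a Neumann series in $T$ fails because the hypothesis only yields $\|T\|\le(\lambda_1+\lambda_2)/(1-\lambda_2)$, which can exceed $1$. Your method-of-continuity argument closes this gap correctly: after splitting off $\lambda_2\le 0$ (so that $\mu_2(t)\ge 0$ in the main case), the rescaled inequality $\|(tT)x\|\le\mu_1(t)\|x\|+\mu_2(t)\|A_tx\|$ is legitimate since the denominator $1-\lambda_2(1-t)\ge 1-\lambda_2>0$, the condition $\mu_1(t)<1$ reduces to the convex combination $t\lambda_1+(1-t)\lambda_2<1$, and continuity of $t\mapsto(1-\mu_1(t))/(1+\mu_2(t))$ on the compact interval gives the uniform lower bound that makes the set of $t$ with $A_t$ invertible open and closed in $[0,1]$. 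This supplies a self-contained justification for a statement the paper merely cites.
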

\section{Perturbation results: $S$-bounded operators}
\begin{thm}\label{mainperturb}
Let $T\in \mathcal C(H_1,H_2)$ be densely defined and $S\in \mathcal B(H_1,H_2)$  be such that
\begin{equation}\label{relativeboundeq}
\|Sx\|\leq \lambda_1\|Tx\|+\lambda_2\|(S+T)x\| \text{for all}\; x\in D(T),
\end{equation}
where $\lambda_1<1$ and $\lambda_2\in \mathbb R$. Then $\lambda_2>-1$ and
\begin{enumerate}
\item\label{perturbclosedrange} $\|(S+T)x\|\geq \frac{1-\lambda_1}{1+\lambda_2}\|Tx\|\; \text{for all}\; x\in D(T)$
\item\label{equalityofnullsp} $N(T)=N(T+S)=N(T)\cap N(S)$
\item \label{sumclosedrange}If $R(T)$ is closed, then $R(T+S)$ is closed. In this case,
\begin{equation*}
\|(T+S)^{\dagger}\|\leq \dfrac{1+\lambda_2}{1-\lambda_1}\|T^{\dagger}\|
\end{equation*}
\item\label{repnofperturbmpi} If in addition, $T$ is onto, the $T+S$ is also onto and
\begin{equation*}
(T+S)^{\dagger}=T^{\dagger}(I+ST^{\dagger})^{-1}.
\end{equation*}
\end{enumerate}
\end{thm}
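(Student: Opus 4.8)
The plan is to extract everything from a single ``master inequality.'' Applying the reverse triangle inequality $\|Sx\|\ge \|Tx\|-\|(S+T)x\|$ to the hypothesis gives, for all $x\in D(T)$,
\begin{equation*}
(1-\lambda_1)\,\|Tx\|\le (1+\lambda_2)\,\|(S+T)x\|,
\end{equation*}
with no restriction on $\lambda_2$. Since $1-\lambda_1>0$ and the left side is nonnegative, if $\lambda_2\le-1$ the right side is $\le 0$ and we are forced to $Tx=0$ for every $x\in D(T)$; excluding this trivial case ($T=0$) yields $\lambda_2>-1$, and then dividing the master inequality by $1+\lambda_2>0$ is exactly assertion (1). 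For assertion (2): if $(S+T)x=0$ then (1) forces $Tx=0$ and hence $Sx=(S+T)x-Tx=0$, so $N(T+S)\subseteq N(T)\cap N(S)$ (the reverse inclusion being trivial); and putting $Tx=0$ back into the hypothesis yields $\|Sx\|\le\lambda_2\|Sx\|$, so $Sx=0$ provided $\lambda_2<1$ (a point to keep in mind --- this is also precisely what Lemma \ref{relativeneumanninequality} requires), whence $N(T)\subseteq N(S)\subseteq N(T+S)$ and all three null spaces coincide.

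\textbf{Closed range and the norm estimate (assertion 3).} Assume $R(T)$ is closed. By (2), $N(T+S)=N(T)$, so $N(T+S)^{\bot}=N(T)^{\bot}$ and therefore $C(T+S)=D(T)\cap N(T)^{\bot}=C(T)$. Proposition \ref{equivalentclosedrange} supplies $k>0$ with $\|Tx\|\ge k\|x\|$ on $C(T)$; inserting this into the master inequality gives $\|(S+T)x\|\ge \frac{(1-\lambda_1)k}{1+\lambda_2}\|x\|$ for all $x\in C(T+S)$, so $R(T+S)$ is closed by the same Proposition. Taking the infimum over the unit sphere of $C(T+S)=C(T)$ in assertion (1) yields $\gamma(S+T)\ge \frac{1-\lambda_1}{1+\lambda_2}\gamma(T)$, and since $\|(T+S)^{\dagger}\|=\gamma(S+T)^{-1}$ and $\|T^{\dagger}\|=\gamma(T)^{-1}$ (Proposition \ref{equivalentclosedrange} again), the estimate $\|(T+S)^{\dagger}\|\le\frac{1+\lambda_2}{1-\lambda_1}\|T^{\dagger}\|$ follows.

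\textbf{The surjective case (assertion 4).} Now $T$ is onto, so $R(T)=H_2$ is closed, $T^{\dagger}\in\mathcal B(H_2,H_1)$ has full domain $H_2$, and $TT^{\dagger}=I$. The decisive move --- and what I expect to be the crux --- is to substitute $x=T^{\dagger}y$ in the hypothesis: using $TT^{\dagger}y=y$ and $(S+T)T^{\dagger}y=ST^{\dagger}y+y=(I+ST^{\dagger})y$ it becomes
\begin{equation*}
\|ST^{\dagger}y\|\le \lambda_1\|y\|+\lambda_2\|(I+ST^{\dagger})y\|\qquad\text{for all }y\in H_2,
\end{equation*}
which is exactly the hypothesis of Lemma \ref{relativeneumanninequality} for the bounded operator $ST^{\dagger}\in\mathcal B(H_2)$; hence $I+ST^{\dagger}$ is bijective. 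Set $U:=T^{\dagger}(I+ST^{\dagger})^{-1}\in\mathcal B(H_2,H_1)$. From $(S+T)T^{\dagger}=I+ST^{\dagger}$ on $H_2$ we get $(S+T)U=I$, so $S+T$ is onto and the first axiom of Definition \ref{geninv} holds (with $P_{\overline{R(S+T)}}=I$); also $N(U)=\{0\}=R(S+T)^{\bot}$ since $N(T^{\dagger})=R(T)^{\bot}=\{0\}$, giving the third axiom. For the second, write $x\in D(T)=N(T)\oplus^{\bot}C(T)$ as $x=n+c$; then $(S+T)x=(S+T)c$, and because $c\in C(T)=R(T^{\dagger})$ satisfies $c=T^{\dagger}(Tc)$ (from $T^{\dagger}Tc=P_{N(T)^{\bot}}c=c$), one computes $(S+T)c=(I+ST^{\dagger})(Tc)$, hence $U(S+T)x=T^{\dagger}(I+ST^{\dagger})^{-1}(I+ST^{\dagger})(Tc)=T^{\dagger}Tc=c=P_{N(T)^{\bot}}x=P_{N(S+T)^{\bot}}x$. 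By the uniqueness in Definition \ref{geninv}, $U=(S+T)^{\dagger}$.

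\textbf{Where the difficulty lies.} The one genuinely nontrivial idea is the substitution $x=T^{\dagger}y$, which converts the relative-boundedness hypothesis into the Neumann-type hypothesis of Lemma \ref{relativeneumanninequality} and thereby delivers the invertibility of $I+ST^{\dagger}$; everything afterward is verification against Definition \ref{geninv}. The routine but necessary care is with domains: that $S+T$ is closed with $D(S+T)=D(T)$, that $R(T^{\dagger})=C(T)$, and that in the surjective case every composition couples a bounded operator with the (now bounded) $T^{\dagger}$, so the formal manipulations are legitimate. I would also flag the standing need for $\lambda_2<1$ in assertion (2), without which $N(T)=N(T+S)$ can fail.
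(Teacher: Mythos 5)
Your proof is correct, and for parts (1)--(3) and the key step of (4) --- substituting $x=T^{\dagger}y$ into the hypothesis so that it becomes exactly the hypothesis of Lemma \ref{relativeneumanninequality} for $ST^{\dagger}$, whence $I+ST^{\dagger}$ is bijective and $T+S$ is onto --- it coincides with the paper's argument. Where you diverge is the endgame of (4): the paper factors $S+T=(I+ST^{\dagger})T$ and invokes its reverse order law (Theorem \ref{riverseorderlaw}) with the bijective bounded factor and the surjective closed factor to obtain $(S+T)^{\dagger}=T^{\dagger}(I+ST^{\dagger})^{-1}$, whereas you verify directly that $U=T^{\dagger}(I+ST^{\dagger})^{-1}$ satisfies the three axioms of Definition \ref{geninv}. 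Your route is more self-contained (it bypasses Theorem \ref{riverseorderlaw}, whose application in the paper in fact has the roles of $F$ and $G$ garbled) at the cost of a few lines of domain bookkeeping, which you carry out correctly. Two of your side remarks deserve emphasis. First, the passage from assertion (1) to the bound on $\|(T+S)^{\dagger}\|$ requires taking the infimum over $S_{C(T+S)}$, and your observation that $C(T+S)=C(T)$ (via $N(T+S)=N(T)$) is exactly what legitimizes this; the paper leaves it implicit and misprints the resulting inequality for $\gamma(T+S)$ with the direction reversed. Second, and more importantly, your flag that assertion (2) genuinely needs $\lambda_2<1$ is correct: with $T=0$, $S=I$, $\lambda_1=0$, $\lambda_2=1$ the hypothesis \eqref{relativeboundeq} holds but $N(T)\not\subseteq N(S)$, so the stated hypothesis ``$\lambda_2\in\mathbb R$'' is too weak, and the paper's own deduction of $Sx=0$ from $\|Sx\|\le\lambda_2\|Sx\|$ and $\lambda_2>-1$ is a non sequitur. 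Note that the same restriction $\lambda_2<1$ is also silently needed throughout assertion (4), both to apply Lemma \ref{relativeneumanninequality} and because the identity $S+T=(I+ST^{\dagger})T$ (equivalently, your computation of $U(S+T)x$) rests on $N(T)\subseteq N(S)$ from assertion (2).
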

\begin{proof}
Proof of (\ref{perturbclosedrange}): By the triangle inequality we have that
\begin{align*}
\|Sx+Tx\|&\geq \big|\|Tx\|-\|Sx\|\big|\\
         &\geq \big |\|Tx\|-\lambda_1\|Tx\|-\lambda_2\|Sx+Tx\|\big|.
\end{align*}
Thus \begin{equation}\label{repneq5}
(1+\lambda_2)\|Sx+Tx\|\geq (1-\lambda_1)\|Tx\|.
\end{equation}
Since, $\lambda_1<1$, it follows that $\lambda_2>1$. Now, by Equation (\ref{repneq5}), we have
\begin{equation*}
\|Sx+Tx\|\geq \dfrac{1-\lambda_1}{1+\lambda_2}\|Tx\|, \; \text{for all}\; x\in D(T).
\end{equation*}
Proof of (\ref{equalityofnullsp}): By (\ref{perturbclosedrange}), it follows that  $N(T+S)\subseteq N(T)$. To prove the otherway, let $x\in N(T)$. Then By Equation (\ref{relativeboundeq}), we have $\|Sx\|\leq \lambda_2\|Sx\|$. That is $(1-\lambda_2)\|Sx\|=0$. As $\lambda_2>-1$, it follows that $\|Sx\|=0$. This implies that $x\in N(S)\cap N(T)$. Hence $N(T)=N(T+S)$.

Proof of (\ref{sumclosedrange}): By (\ref{perturbclosedrange}), we have that
\begin{equation}\label{repneq6}
\gamma(T+S)\leq \frac{1-\lambda_1}{1+\lambda_2}\gamma(T).
\end{equation}
If $R(T)$, then $\gamma(T)>0$. Hence $\gamma(T+S)>0$. By \ref{equivalentclosedrange}, $R(T+S)$ is closed. Using proposition and Equation (\ref{repneq6}), we get the required inequality.

Proof of (\ref{repnofperturbmpi}): Since $N(T+S)=N(S)$, it follows that $ST^{\dagger}T=S|_{D(T)}$ by Proposition \ref{sumclosedrange1}. Hence $S+T=ST^{\dagger}T+T=(I+ST^{\dagger})T$. If $T$ is onto, we have for every $y\in H_2$, $TT^{\dagger}y=y$. Hence by Hypothesis we have
\begin{align*}
\|ST^{\dagger}y\|&\leq \lambda_1\|TT^{\dagger}y\|+\lambda_2\|(S+T)T^{\dagger}y\|\\
                 &\leq \lambda_1\|y\|+\lambda_2\|(I+ST^{\dagger}i+T)y\|.
\end{align*}
So by Lemma \ref{relativeboundeq}, $(I+ST^{\dagger})$ is bijective and hence $(T+S)(H_1)=(I+ST^{\dagger})(TH_1)=(I+ST^{\dagger})(H_2)=H_2$. This shows that $T+S$ is onto.

It remains to show the representation. Note that $H_2=R(T+S)\subseteq R(I+ST^{\dagger})$, concluding $I+ST^{\dagger}$ is onto. Let $F=T$ and $G=I+ST^{\dagger}$. Then $F$ and $G$ satisfy the hypotheses of Theorem \ref{riverseorderlaw}. Hence by (\ref{riverseorderlaw}), we get that
\begin{align*}
(T+S)^{\dagger}(I+ST^{\dagger})&=\big(ST^{\dagger}T+T\big)^{\dagger}(I+ST^{\dagger})\\
                               &=\big((I+ST^{\dagger})T\big)^{\dagger}(I+ST^{\dagger})\\
                               &=T^{\dagger}(I+ST^{\dagger})^{-1}(I+ST^{\dagger})\\
                             &=T^{\dagger}.
\end{align*}
Therefore $(S+T)^{\dagger}=T^{\dagger}(I+ST^{\dagger})^{-1}$.
\end{proof}
We have the following consequence.
\begin{cor}
Let $T\in \mathcal C(H_1,H_2)$ be densely defined and $S\in \mathcal B(H_1,H_2)$ be such that
\begin{equation*}
\|(S-T)x\|\leq \lambda_1\|Tx\|+\lambda_2\|Sx\| \; \text{for all}\; x\in D(T),
\end{equation*}
where $\lambda_1<1$. Then $\lambda_2>-1$ and
\begin{enumerate}
\item $N(T)=N(S)$
\item $\|Sx\|\geq \frac{1-\lambda_1}{1+\lambda_2}\|Tx\|, \text{for all}\; x\in D(T)$
\item If $R(T)$ is closed, then $R(S)$ is closed. In this case,
\begin{equation*}
\|S^{\dagger}\|\leq \frac{1+\lambda_2}{1-\lambda_1}\|T^{\dagger}\|
\end{equation*}
\item In addition, if $T$ is onto, then $S$ is onto and
\begin{equation*}
S^{\dagger}=T^{\dagger}\big(I+(S-T)T^{\dagger}\big)^{-1}=T^{\dagger}\displaystyle \sum_{n=0}^\infty \big((S-T)T^{\dagger}\big)^n.
\end{equation*}
\end{enumerate}
\end{cor}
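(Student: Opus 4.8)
\emph{Proposal.} The plan is to deduce this Corollary from Theorem \ref{mainperturb} by the substitution $\widetilde{S}:=S-T$. With this relabelling one has $\widetilde{S}+T=S$, and the hypothesis $\|(S-T)x\|\le\lambda_1\|Tx\|+\lambda_2\|Sx\|$ becomes exactly $\|\widetilde{S}x\|\le\lambda_1\|Tx\|+\lambda_2\|(\widetilde{S}+T)x\|$ for all $x\in D(T)$; so the four conclusions of Theorem \ref{mainperturb} translate term by term into the four assertions here, namely $N(T)=N(T+\widetilde{S})=N(S)$, the lower bound $\|Sx\|\ge\frac{1-\lambda_1}{1+\lambda_2}\|Tx\|$, closedness of $R(S)=R(T+\widetilde{S})$ with $\|S^\dagger\|=\|(T+\widetilde{S})^\dagger\|\le\frac{1+\lambda_2}{1-\lambda_1}\|T^\dagger\|$, surjectivity of $S$, and $S^\dagger=T^\dagger(I+(S-T)T^\dagger)^{-1}$. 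The single point that has to be bridged is that $\widetilde{S}=S-T$ is only a densely defined closed operator with domain $D(T)$, not a bounded one, so Theorem \ref{mainperturb} cannot be quoted verbatim; instead I would revisit its proof and verify that boundedness of the perturbing operator is never essentially used.

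For parts (1)--(3) that verification is routine. The triangle-inequality computation giving $\lambda_2>-1$ and $(1+\lambda_2)\|Sx\|\ge(1-\lambda_1)\|Tx\|$ uses only the stated inequality; the identification $N(S)=N(T)$ comes from feeding $x\in N(T)$ back into the hypothesis (which forces $(1-\lambda_2)\|Sx\|\le 0$, hence $Sx=0$); and the passage to reduced minimum moduli uses Proposition \ref{equivalentclosedrange} together with the fact that $S$ itself \emph{is} bounded. One notes here that $C(S)=\overline{C(T)}=N(T)^\perp$ because $N(S)=N(T)$, so the lower bound on $C(T)$ plus continuity of $S$ yields $\gamma(S)\ge\frac{1-\lambda_1}{1+\lambda_2}\gamma(T)$, whence $R(S)$ is closed when $R(T)$ is, and $\|S^\dagger\|=1/\gamma(S)$ gives the estimate.

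The substantive part is (4). In Theorem \ref{mainperturb} boundedness of $S$ entered via $ST^\dagger$; here I would observe that, since $T$ is onto and hence $TT^\dagger=I$, one has $(S-T)T^\dagger=ST^\dagger-TT^\dagger=ST^\dagger-I$, which \emph{is} bounded, and $I+(S-T)T^\dagger=ST^\dagger$. Putting $x=T^\dagger y$ in the hypothesis and using $TT^\dagger=I$ gives $\|(S-T)T^\dagger y\|\le\lambda_1\|y\|+\lambda_2\|(I+(S-T)T^\dagger)y\|$ for all $y\in H_2$, so Lemma \ref{relativeneumanninequality} (with $\lambda_2<1$) applies and yields that $ST^\dagger=I+(S-T)T^\dagger$ is bijective with bounded inverse. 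Surjectivity of $S$ then follows from $ST^\dagger T=S|_{D(T)}$ (Proposition \ref{sumclosedrange1}, using $N(T)\subseteq N(S)$), since $S(D(T))=ST^\dagger\big(T(D(T))\big)=ST^\dagger(H_2)=H_2$.

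For the formula I expect to deviate from a literal transcription of the proof of Theorem \ref{mainperturb}(4): the identity $S=(ST^\dagger)T$ holds only on $D(T)$, and the restriction $S|_{D(T)}$ is not closed, so the reverse-order law (Theorem \ref{riverseorderlaw}) cannot be applied to that composite directly. Instead I would check by hand that $U:=T^\dagger(I+(S-T)T^\dagger)^{-1}=T^\dagger(ST^\dagger)^{-1}$ satisfies the defining properties of $S^\dagger$ in Definition \ref{geninv}: $SU=(ST^\dagger)(ST^\dagger)^{-1}=I=P_{R(S)}$ as $S$ is onto; $US=P_{N(S)^\perp}$, verified by noting it annihilates $N(S)=N(T)$ while for $x\in C(T)$ one has $Sx=ST^\dagger Tx$ and hence $USx=T^\dagger Tx=x$, and then extending to $N(T)^\perp=\overline{C(T)}$ by continuity of the bounded operator $US$; and $N(U)=\{0\}=R(S)^\perp$ because $(ST^\dagger)^{-1}$ is bijective and $T^\dagger$ is injective ($N(T^\dagger)=R(T)^\perp=\{0\}$). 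By uniqueness of the Moore--Penrose inverse this gives $S^\dagger=T^\dagger(I+(S-T)T^\dagger)^{-1}$, and expanding the bounded inverse as a Neumann series (convergent, e.g., when $\lambda_2=0$, where $\|(S-T)T^\dagger\|\le\lambda_1<1$) gives the series form. The main obstacle is thus exactly the unboundedness of $S-T$: it is what forces one to replace the reverse-order-law argument of Theorem \ref{mainperturb}(4) by this direct axiom check, everything else being bookkeeping.
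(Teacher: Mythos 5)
Your proposal is correct and, in skeleton, follows the paper's route: the paper's entire proof of this corollary is the single sentence that the statements follow by substituting $S$ by $S-T$ in Theorem \ref{mainperturb}. Where you genuinely diverge is in taking that substitution seriously: you observe, correctly, that $\widetilde{S}=S-T$ is only a densely defined closed operator, so the hypothesis $S\in\mathcal B(H_1,H_2)$ of Theorem \ref{mainperturb} fails and the theorem cannot be quoted verbatim --- a point the paper passes over in silence. Your repairs are the right ones: for (1)--(3) the inequality manipulations are insensitive to boundedness, and the density-plus-continuity step (using $C(S)=N(T)^{\perp}=\overline{C(T)}$) is exactly what is needed, since $C(S)$ is in general strictly larger than $C(T)$ and the lower bound is only given on $D(T)$; for (4), the identity $I+(S-T)T^{\dagger}=ST^{\dagger}$ (valid because $TT^{\dagger}=I$ when $T$ is onto) together with a direct verification of the Moore--Penrose axioms for $U=T^{\dagger}(ST^{\dagger})^{-1}$ replaces the reverse-order-law argument of Theorem \ref{riverseorderlaw}, which indeed cannot be applied to the non-closed restriction $S|_{D(T)}$. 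What your version buys is an actually complete proof; what it costs is length. Two caveats you inherit from the paper rather than introduce: the step forcing $Sx=0$ for $x\in N(T)$ requires $\lambda_2<1$ (not merely $\lambda_2>-1$), as does the appeal to Lemma \ref{relativeneumanninequality} in part (4); and the Neumann series as printed in the statement has the wrong sign, since $(I+A)^{-1}=\sum_{n=0}^{\infty}(-A)^{n}$, and converges only under an additional smallness assumption such as $\lambda_2=0$, which you flag parenthetically.
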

\begin{proof}
All these statements can be proved by substituting $S$ by $S-T$ in Theorem \ref{mainperturb}.
\end{proof}
As a particular case of Theorem \ref{mainperturb}, we can deduce the following error bound for the Moore-Penrose inverse.
\begin{cor}\label{consequenceofmain}
Let $T$ and $S$ be as in Theorem \ref{mainperturb}. In addition assume that $T$ is onto and $\lambda_2=0$. Then
\begin{enumerate}
\item\label{neumannsbound} $\|ST^{\dagger}\|<1$ and $(I+ST^{\dagger})^{-1}\in \mathcal B(H_2)$. In this case,
\begin{equation*}
\|(I+ST^{\dagger})^{-1}\|\leq \frac{1}{1-\|ST^{\dagger}\|}
\end{equation*}
\item\label{perturbofmpibound} $\|(S+T)^{\dagger}-T^{\dagger}\|\leq \frac{\|T^{\dagger}\|^2\|S\|}{1-\|ST^{\dagger}\|}$.
\end{enumerate}
\end{cor}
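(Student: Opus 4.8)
The plan is to deduce part~(\ref{neumannsbound}) directly from the hypothesis once $\lambda_2=0$, and then to obtain the error bound in part~(\ref{perturbofmpibound}) as a purely algebraic consequence of the representation $(T+S)^{\dagger}=T^{\dagger}(I+ST^{\dagger})^{-1}$ furnished by Theorem~\ref{mainperturb}(\ref{repnofperturbmpi}).

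First I would observe that with $\lambda_2=0$ the inequality~(\ref{relativeboundeq}) collapses to $\|Sx\|\leq \lambda_1\|Tx\|$ for all $x\in D(T)$, with $\lambda_1<1$. Since $T$ is onto, for each $y\in H_2$ the vector $T^{\dagger}y$ lies in the carrier $C(T)\subseteq D(T)$ and satisfies $TT^{\dagger}y=P_{\overline{R(T)}}\,y=y$ by Definition~\ref{geninv}. Substituting $x=T^{\dagger}y$ into the inequality yields $\|ST^{\dagger}y\|\leq \lambda_1\|TT^{\dagger}y\|=\lambda_1\|y\|$, so $\|ST^{\dagger}\|\leq \lambda_1<1$. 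The Neumann series then gives $(I+ST^{\dagger})^{-1}\in\mathcal B(H_2)$ together with the bound $\|(I+ST^{\dagger})^{-1}\|\leq (1-\|ST^{\dagger}\|)^{-1}$, since $\|-ST^{\dagger}\|=\|ST^{\dagger}\|<1$. This proves (\ref{neumannsbound}).

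For (\ref{perturbofmpibound}) I would start from $(T+S)^{\dagger}=T^{\dagger}(I+ST^{\dagger})^{-1}$, which applies because $T$ is onto (so $R(T+S)=H_2$ is closed and $(T+S)^{\dagger}$ is bounded). Then
\begin{equation*}
(T+S)^{\dagger}-T^{\dagger}=T^{\dagger}\big[(I+ST^{\dagger})^{-1}-I\big]=-\,T^{\dagger}(I+ST^{\dagger})^{-1}ST^{\dagger},
\end{equation*}
using the identity $(I+ST^{\dagger})^{-1}-I=-(I+ST^{\dagger})^{-1}ST^{\dagger}$, and taking norms gives
\begin{equation*}
\|(T+S)^{\dagger}-T^{\dagger}\|\leq \|T^{\dagger}\|\,\|(I+ST^{\dagger})^{-1}\|\,\|S\|\,\|T^{\dagger}\|\leq \frac{\|T^{\dagger}\|^{2}\|S\|}{1-\|ST^{\dagger}\|},
\end{equation*}
which is the claimed estimate. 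There is essentially no obstacle here; the only point needing a little care is the domain bookkeeping in the substitution $x=T^{\dagger}y$, where one must use that $T^{\dagger}$ takes values in $C(T)\subseteq D(T)$ and that surjectivity of $T$ upgrades $TT^{\dagger}=P_{\overline{R(T)}}$ to $TT^{\dagger}=I_{H_2}$. The rest is the Neumann series and the triangle inequality.
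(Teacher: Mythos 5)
Your proof is correct and follows essentially the same route as the paper: both derive $(T+S)^{\dagger}-T^{\dagger}=-T^{\dagger}(I+ST^{\dagger})^{-1}ST^{\dagger}$ from the representation in Theorem \ref{mainperturb}(\ref{repnofperturbmpi}) and then apply the Neumann-series bound. In fact you supply more detail than the paper for part (\ref{neumannsbound}), which the paper dismisses as a ``standard result''; your substitution $x=T^{\dagger}y$ to get $\|ST^{\dagger}\|\leq\lambda_1<1$ is exactly the missing justification.
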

\begin{proof}
The proof of (\ref{neumannsbound}) follows from standard result on bounded operators.

Proof of (\ref{perturbofmpibound}): Consider

\begin{align*}
\|(S+T)^{\dagger}-T^{\dagger}\|&=\|T^{\dagger}(I+ST^{\dagger})^{-1}-T^{\dagger}(I+ST^{\dagger})^{-1}(I+ST^{\dagger})\|\\
                               &=\|T^{\dagger}(I+ST^{\dagger})^{-1}\big(I-(I-ST^{\dagger})\big)\|\\
                               &\leq \frac{\|T^{\dagger}\|^2\|S\|}{1-\|ST^{\dagger}\|}.\qedhere
\end{align*}
\end{proof}

\bibliographystyle{amsplain}
\bibliography{rameshmainthesis}
\end{document}